     \def\section{\@startsection{section}{1}%
     \z@{.7\linespacing\@plus\linespacing}{.5\linespacing}%
     {\bfseries
     \centering
     }}
     \def\@secnumfont{\bfseries}
\newtheorem{theorem}{Theorem}[section]
\newtheorem{proposition}[theorem]{Proposition}
\theoremstyle{definition}
\newtheorem{definition}[theorem]{Definition}
\newtheorem{example}[theorem]{Example}
\theoremstyle{remark}
\newtheorem{remark}[theorem]{Remark}
\numberwithin{equation}{section} \setcounter{page}{1}
\newcommand{\ombar}{\overline{\Omega}}
 \newcommand{\po}{\partial\Omega}
 \newcommand{\me}{\mathcal E}
 \newcommand{\mf}{\mathcal F}
 \newcommand{\mfm}{\mathcal F^{\mu}}
 \newcommand{\mem}{\mathcal E_{\mu}}
 \newcommand{\Huntild}{\widetilde{H}^1(\Omega)}
\newcommand{\pmu}{\mathcal P_t^{\mu}}
\newcommand{\ma}{\mathcal A}
\newcommand{\emu}{e^{-t\Delta_{\mu}}}
 \newcommand{\Cap}{\mathrm{Cap}_{\ombar}}
\newcommand{\dmu}{\Delta_{\mu}}
\newcommand{\emup}{e^{-t\Delta_{\mu^+}}}
\newcommand{\emun}{e^{-t\Delta_{-\mu^-}}}
 \newcommand{\memp}{\mathcal E_{\mu^+}}
 \newcommand{\memn}{\mathcal E_{-\mu^-}}
\newcommand{\mfmp}{\mathcal F^{\mu^+}}
\begin{document}

\title[]
 {The Laplacian with Robin Boundary Conditions involving signed measures
}

\author{ AKHLIL KHALID *}

\address{Institute of Applied Analysis, University of Ulm, 89069 Ulm, Germany.}

\email{khalid.akhlil@uni-ulm.de}

\thanks{* The auhtor is supported by ``Deutscher Akademischer Austausch Dienst``(German Academic Exchange Service)}

\thanks{}

\subjclass[2000]{31C15, 31C25, 47D07, 60H30, 60J35, 60J60, 60J45}

\keywords{Dirichlet forms;Kato class of measures;Robin boundary conditions}

\date{}

\dedicatory{}

\commby{}


\begin{abstract}

In this work we propose to study the general Robin boundary value problem involving signed smooth measures on an arbitrary domain $\Omega$ of $\mathbb R^d$. A Kato class of measures is defined to insure the closability of the associated form $(\mem,\mfm)$. Moreover, the associated operator $\Delta_{\mu}$ is a realization of the Laplacian on $L^2(\Omega)$. In particular, when $|\mu|$ is locally infinite everywhere on $\po$, $\Delta_{\mu}$ is the laplacian with Dirichlet boundary conditions. On the other hand, we will prove that he semigroup $(\emu)_{t\geq 0}$ is sandwitched between $(\emup)_{t\geq 0}$ and $(\emun)_{t\geq 0}$ and we will see that the converse is also true.

\end{abstract}

\maketitle

\section{Introduction}

This Paper is a complement of our first paper \cite{A}, where we have studied Robin Laplacian in arbitrary domains involving positive smooth measures. Here we want to carry out the case of signed smooth measures.

As for Kato class defined in the context of perturbation of Dirichlet forms, one should define a specific Kato class adapted to the treatment of the Robin Laplacian. One should have in mind that this case is in fact a perturbation of the Neumann boundary condition by a certain measure.

In the Litterature, the first who had defined such class of measures was V. G. Papanicolau \cite{P}. His aim was to give the probabilistic solution of the schr\"odinger operator $-\Delta +V$ with Robin boundary condition $\frac{\partial}{\partial\nu}+\beta$ on the boundary $\po$, where $\Omega$ is bounded domain with $C^3$ boundary, and $\nu$ the outward unit normal vector on $\po$. The Borel function $\beta$ belong to a specific Kato class $\Sigma(\po)$, which means that,
\begin{equation}\label{eq:kat}
 \lim\limits_{t\downarrow 0}\sup_{x\in\ombar}\mathrm E^x\left[\int_0^t|\beta(X_s)|d L_s\right]=0
\end{equation}
where $L$ is the boundary local time of the Standard reflecting Brownian motion $X$ on $\ombar$.

With the same smootness assymption on the domain as above, Ma and Song \cite{MS,MS2} worked with a generalized Kato class of measure on $\ombar$ to study in a probabilistic point of view, the third boundary value problems, semilinear and generalized mixed boundary value problems. Ramasubramanian in \cite{R} remarks that, one can generalize the treatment in \cite{P} to bounded Lipschitz domains.

There is no study of the Robin boundary value problems in an arbitrary domain involving signed smooth measures on the boundary. There is two reasons for this: First, one need the reflecting Brownian motion $X$ on $\ombar$, which is defined to be the Hunt process associated with the Dirichlet form
\begin{equation}
 \me(u,v)=\int_{\Omega}\nabla u\nabla v dx,\quad\forall u,v\in H^1(\Omega)
\end{equation}

The dirichlet form $(\me,H^1(\Omega))$ need not to be regular, and then nothing insure the existence of $X$. Moreover, one can not define, for a ``bad'' $\Omega$, the capacity induced by $(\me,H^1(\Omega))$, and then to be able to reproduce the theory of perturbation of regular Dirichlet forms \cite{AM1,AM2,BM,FOT,S,SV} in our special case.

Throught \cite{FOT} for example,the form $(\me,\mf)$ is a regular Dirichlet form on 
\\$L^2(X,m)$ , where $X$ is a locally compact separable metric space, and $m$ a positive Radon measure on $X$ with $\mathrm{supp}[m]=X$.

For our purposes we take as in \cite{AW2} $X=\ombar$, where $\Omega$ is an open subset of $\mathbb R^d$, and the measure $m$ on the $\sigma-$algebra $\mathcal B(X)$ is given by $m(A)=\lambda(A\cap\Omega)$ for all $A\in\mathcal B(X)$ with $\lambda$ the Lebesgue measure, it follows that $L^2(\Omega)=L^2(X,\mathcal B(X),m)$, and we define a regular Dirichlet form $(\me,\mf)$ on $L^2(\Omega)$ by:

\[
 \me(u,v)=\int_{\Omega}\nabla u\nabla v dx\quad,
\forall u,v\in\mf
\]
where $\mf=\Huntild$ is the closure of $H^1(\Omega)\cap C_c(\overline{\Omega})$ in $H^1(\Omega)$. The domain $\widetilde{H}^1(\Omega)$ is so defined to insure the Dirichlet form $(\me,\mf)$ to be regular. In the special case where $\Omega$ is bounded with Lipschitz boundary, we have $\widetilde{H}^1(\Omega)=H^1(\Omega)$.

In \cite{A}, we have considered a perturbation on the boundary by a positive smooth measure. Here, we consider a purturbation on the boundary by signed smooth measure, we define then, for $\mu\in S(\po)-S(\po)$ 
\[
 \me_{\mu}(u,v)=\int_{\Omega}\nabla u\nabla v dx+\int_{\po}uvd\mu,\text{  }\forall u,v\in \mf^{\mu}
\]
where $\mfm=\Huntild\cap L^2(\po,|\mu|)$.

More precisely, we define a particular Kato class of measures, adapted to our context, we give also some of its properties and its analytic description, this is the subject of section 3. In section 4, we consider the Robin problem involving signed smooth measures. We will see that when $\mu\in S(\po)-S_K(\po)$, the Dirichlet form $(\mem,\mfm)$ is closed and the associated selfadjoint operator $\Delta_{\mu}$ is a realization of the Laplacian on $L^2(\po)$. In the special case where $|\mu|$ is locally infinite on $\po$, then $\Delta_{\mu}$ is the Laplacian with Dirichlet boundary conditions. Moreover,  $(\mem,\mfm)$ is regular if and only if $|\mu|$ is a Radon measure. In section 4, we will prove a domination theorem. It says that the semigroup $(\emu)_{t\geq 0}$ is sandwitched between $(\emup)_{t\geq 0}$ and $(\emun)_{t\geq 0}$. We will see that the converse is also true. That means that if one have a semigroup $(T(t))_{t\geq 0}$ sandwitched between $(\emup)_{t\geq 0}$ and $(\emun)_{t\geq 0}$, then $T(t)=e^{-t\Delta_{\nu-\mu^-}}$, where $\nu$ is a Radon measure charging no set of zero relative capacity.

\section{Preliminaries}

This section is devoted to preparations for the next sections. More precisily, it concerns the notion of relative capacity, smooth measures concentrated on the boundary $\po$, and the revuz correspondence between this smooth measures and positive additive functionals supported also by $\po$: thanks to the fact that the support of an additive functional is the relative quasi-support of its associated measure. In the two last subsections, we will define a general reflected Brwnian motion adapted to our context, and finally we review the case of positive smooth measures studied in \cite{A}.

We start with the Regular Dirichlet form $(\me,\mf)$ on $L^2(\Omega)$ defined by
\begin{equation}\label{eq:fe}
 \me(u,v)=\int_{\Omega}\nabla u\nabla v dx\quad,
\forall u,v\in\mf
\end{equation}
 where $\mf=\Huntild$, and we denote for any $\alpha>0:\text{
}\me_{\alpha}(u,v)=\me(u,v)+\alpha(u,v),\\\text{ }\forall
u,v\in\mf$.

\subsection{ Relative Capacity} The relative capacity is introduced
in a first time in \cite{AW1} to study the Laplacian with general
Robin boundary conditions on arbitrary domains. It is a special
case of the capacity associated with a regular Dirichlet form as
described in chapter 2 of \cite{FOT}. It seems to be an efficient
tool to analyse the phenomena occurring on the boundary $\po$ of
$\Omega$.

The relative capacity which we denote by
$\Cap$ is defined on a subsets of
$\overline{\Omega}$ by: For $A\subset\overline{\Omega}$ relatively
open (i.e. open with respect to the topology of
$\overline{\Omega}$) we set:
\[
 \Cap(A):=\text{inf}\{\me_1(u,u):u\in\widetilde{H}^1(\Omega):u\geq 1\text{ a.e on
}A\}
\]

And for arbitrary $A\subset\overline{\Omega}$, we set:
\[
 \Cap(A):=\text{inf}\{\Cap(B):B
\text{ relatively open }A\subset B\subset\overline{\Omega}\}
\]

A set $N\subset\overline{\Omega}$ is called a relatively polar if
$\Cap(N)=0$.

The relative capacity has the properties of a
capacity as described in \cite{FOT}. In particular, $\Cap$
is also an outer measure (but not a Borel measure) and a Choquet
Capacity.

A statement depending on $x\in A\subset\overline{\Omega}$ is said
to hold relatively quasi-everywhere (r.q.e.) on $A$, if there
exist a relatively polar set $N\subset A$ such that the statement
is true for every $x\in A\setminus N$.

Now we may consider functions in $\widetilde{H}^1(\Omega)$ as
defined on $\overline{\Omega}$, and we call a function
$u:\overline{\Omega}\rightarrow\mathbb R$ relatively
quasi-continuous (r.q.c.) if for every $\epsilon>0$ there exists a
relatively open set $G\subset\overline{\Omega}$ such that
$\Cap(G)<\epsilon$ and
$u|_{\overline{\Omega}\setminus G}$ is continuous.

It follows \cite{Wa}, that for each $u\in\Huntild$ there exists a
relatively quasi-continuous function
$\widetilde{u}:\ombar\rightarrow\mathbb R$ such that
$\widetilde{u}(x)=u(x)$ $m-$a.e. This function is unique
relatively quasi-everywhere. We call $\widetilde{u}$ the
relatively quasi-continuous representative of $u$.

For more details, we refer the reader to \cite{AW1,Wa}, where the
relative capacity is investigated, as well as its relation to the
classical one
\subsection{Revuz corespondence}
All families of measures on $\po$ defined in this subsection, was
originally defined on $X$ \cite{FOT}, and then in our settings on
$X=\ombar$, as a special case. We put $\po$ between brackets to recall our context, and
we keep in mind that the same things are valid if we put $\Omega$
or $\ombar$ instead of $\po$.

Let $\Omega\subset\mathbb R^N$ be open. A positive Radon measure
$\mu$ on $\partial\Omega$ is said to be of finite energy integral
if $$\int_{\partial\Omega}|v(x)|\mu(dx)\leq C\sqrt{\mathcal
E_1(v,v)}\quad,v\in\mathcal F\cap C_c(\overline{\Omega})$$for some
positive constant $C$. A positive Radon measure on
$\partial\Omega$ is of finite energy integral if and only if there
exists, for each $\alpha>0$, a unique function
$U_{\alpha}\mu\in\mathcal F$ such that $$\mathcal
E_{\alpha}(U_{\alpha}\mu,v)=\int_{\partial\Omega}v(x)\mu(dx)$$ We
call $U_{\alpha}\mu$ an $\alpha-$potential.

We denote by $S_0(\partial\Omega)$, the family of all positive
Radon measures of finite energy integral. We recall that each measure in $S_0(\partial\Omega)$ charges no set of zero
relative capacity.

We now turn to a class of measures larger than
$S_0(\po)$. Let us call a (positive) Borel measure $\mu$ on $\po$
smooth if it satisfies the following conditions:

- $\mu$ charges no set of zero relative capacity.

- There exist an increasing sequence $(F_n)_{n\geq 0}$ of closed
sets of $\po$ such that: \begin{equation}
\mu(F_n)<\infty\quad,n=1,2,...\end{equation}

\begin{equation} \lim_{n\rightarrow +\infty}Cap_{\ombar}(K\setminus F_n)=0\text{
for any compact } K\subset\po
\end{equation}

We denote by $S(\po)$ the family of all smooth measures on $\po$. The class
$S(\po)$ is quiet large and it contains all positive Radon measure
on $\po$ charging no set of zero relative capacity. There exist
also, by Theorem 1.1 \cite{AM2} a smooth measure $\mu$ on $\po$ (
hence singular with respect to $m$) "nowhere Radon" in the sense
that $\mu(G)=\infty$ for all non-empty relatively open subset $G$
of $\po$ (See Example 1.6\cite{AM2}).

Now we turn our attention to the correspondence between smooth
measures and additive functionals, known as Revuz correspondence.
As the support of an additive functional is the quasi-support of
its Revuz measure, we restrict our attention, as for smooth
measures, to additive functionals supported by $\po$. Recall that
as the Dirichlet form $(\me,\mf)$ is regular, then there exists a
Hunt process $M=(\Xi,X_t,\xi,P_x)$ on $\ombar$ which is
$m-$symmetric and associated with it.

\begin{definition}A function
$A:[0,+\infty[\times\Xi\rightarrow[-\infty,+\infty]$ is said to be
an Additive functional (AF) if:

1) $A_t$ is $\mathcal F_t-$measurable.

2) There exist a defining set $\Lambda\in\mathcal F_{\infty}$ and
an exceptional set $N\subset \po$ with $\Cap(N)=0$ such
that $P_x(\Lambda)=1$, $\forall x\in \Omega\setminus N$,
$\theta_t\Lambda\subset\Lambda$, $\forall t>0$;
$\forall\omega\in\Lambda, A_0(\omega)=0$; $|A_t(\omega)|<\infty$
for $t<\xi$. $A_{.}(\omega)$ is right continuous and has left
limit, and $ A_{t+s}(\omega)=A_t(\omega)+A_s(\theta_t\omega)\text{
}s,t\geq 0$\end{definition}

An additive functional is called positive continuous (PCAF) if, in
addition, $A_t(\omega)$ is nonnegative and continuous for each
$\omega\in\Lambda$. The set of all PCAF's on $\po$ is denoted
$\ma_c^{+}(\po)$.

Two additive functionals $A^1$ and $A^2$ are said to be equivalent
if for each $t>0$, $P_x(A^1_t=A^2_t)=1\text{ r.q.e }x\in \ombar$.

We say that $A\in\ma_c^+(\po)$ and $\mu\in S(\po)$ are in the
Revuz correspondence, if they satisfy, for all $\gamma-$excessive
function $h$, and $f\in\mathcal B_+(\ombar)$,  the
relation:$$\lim_{t\searrow
0}\frac{1}{t}E_{h.m}\left[\int_0^tf(X_s)dA_s\right]=\int_{\po}h(x)(f.\mu)(dx)$$
The family of all equivalence classes of $\ma_c^+(\po)$ and the
family $S(\po)$ are in one to one correspondence under the Revuz
correspondence. In this case, $\mu\in S(\po)$ is called the Revuz
measure of $A$.

\begin{example}
We suppose $\Omega$ to be bounded with Lipschitz boundary.
We have \cite{P}:
$$\lim_{t\searrow
0}\frac{1}{t}E_{h.m}\left[\int_0^tf(X_s)dL_s\right]=\frac{1}{2}\int_{\partial\Omega}h(x)f(x)\sigma(dx)$$
where $L_t$ is the boundary local time of the reflecting Brownian
motion on $\ombar$, and $\sigma$ the surface measure. It follows that $\frac{1}{2}\sigma$ is the
Revuz measure of $L_t$ .
\end{example}

\subsection{General reflected Brownian motion}

Now we turn our attention to the process associated with the
regular Dirichlet form $(\me,\mf)$ on $L^2(\Omega)$ defined by:

\begin{equation}\me(u,v)=\int_{\Omega}\nabla u\nabla v dx\quad,
\mf=\widetilde{H^1}(\Omega)\end{equation}

Due to the Theorem of Fukushima \cite{FOT}, there is a Hunt process
$(X_t)_{t\geq 0}$ associated with it. In addition, $(\me,\mf)$ is
local, thus the Hunt process is in fact a diffusion process (i.e.
A strong Markov process with continuous sample paths). The
diffusion process $M=(X_t,P_x)$ on $\ombar$ is associated with the
the form $\me$ in the sense that the transition semigroup
$p_tf(x)=E_x[f(X_t)]$, $x\in\ombar$ is a version of the
$L^2-$semigroup $\mathcal P_tf$ generated by $\me$ for any
nonnegative $L^2-$function $f$.
We call the diffusion process on $\ombar$ associated with
$(\me,\mf)$ the \emph{General reflecting Brownian motion}.

The process $X_t$ is so named to recall the standard reflecting
Brownian motion in the case of bounded smooth $\Omega$, as the
process associated with $(\me,H^1(\Omega))$. Indeed, when $\Omega$
is bounded with Lipschitz boundary we have that
$\Huntild=H^1(\Omega)$, and by \cite{BH2} the reflecting Brownian
motion $X_t$ admits the following Skorohod representation:

\begin{equation}\label{eq:sko}
X_t=x+W_t+\frac{1}{2}\int_0^t\nu(X_s)dL_s,
\end{equation}
where $W$ is a standard $d-$dimensional Brownian motion, $L$ is the
boundary local(continuous additive functional) associated with
surface measure $\sigma$ on $\po$, and $\nu$ is the inward unit
normal vector field on the boundary.

Now,we apply a general decomposition theorem of additive
functionals to our process $M$. In the same way as in \cite{BH2}, the continuous additive functional
$\widetilde{u}(X_t)-\widetilde{u}(X_0)$ can be decomposed as
follows:
$$\widetilde{u}(X_t)-\widetilde{u}(X_0)=M_t^{[u]}+N_t^{[u]}$$where
$M_t^{[u]}$ is a martingale additive functional of finite energy
and $N_t^{[u]}$ is a continuous additive functional of zero
energy.

Since $(X_t)_{t\geq 0}$ has continuous sample paths, $M_t^{[u]}$
is a continuous martingale whose quadratic variation process is:

\begin{equation}\label{eq:lev}
<M^{[u]},M^{[u]}>_t=\int_0^t|\nabla
u|^2(X_s)ds
\end{equation}

Instead of $u$ we take coordinate function $\phi_i(x)=x_i$. We
have $$X_t=X_0+M_t+N_t$$

We claim that $M_t$ is a Brownian motion with respect to the
filtration of $X_t$. To see that, we use L\'evys criterion. This
follows immediately from \eqref{eq:lev}, which became in the case of
coordinate function: $$<M^{[\phi_i]},M^{[\phi_i]}>=\delta_{ij}t$$

\begin{proposition}
The additive functional $N_t$ is supported by $\po$.
\end{proposition}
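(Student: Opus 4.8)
The plan is to localise the Fukushima decomposition to relatively compact balls inside $\Omega$, where the coordinate functions are harmonic, and then to invoke the local property of the continuous additive functional of zero energy appearing in that decomposition.

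First I would fix a ball $B$ with $\overline{B}\subset\Omega$ and a cut-off $\chi\in C_c^\infty(\Omega)$ with $\chi\equiv 1$ on $B$, and set $w:=\chi\,\phi_i$. Then $w\in C_c^\infty(\Omega)\subset\Huntild=\mf$, and since $\mathrm{supp}\,w$ does not meet $\po$, the boundary term in $(\me,\mf)$ plays no role for $w$: it lies in the domain of the self-adjoint operator associated with $(\me,\mf)$, so its Fukushima decomposition is the classical one, $\widetilde w(X_t)-\widetilde w(X_0)=M_t^{[w]}+N_t^{[w]}$, with the Revuz measure of $N^{[w]}$ a positive constant multiple of $(\Delta w)\,m$. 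A direct computation gives $\Delta w=\phi_i\,\Delta\chi+2\,\partial_i\chi$, which vanishes identically on $B$ because $\chi\equiv 1$ there; hence the Revuz measure of $N^{[w]}$ charges no subset of $B$, i.e. $N^{[w]}$ does not vary while $X$ stays in $B$.

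Next I would compare $\phi_i$ with $w$. Both belong to $\mf_{\mathrm{loc}}$ (for $\phi_i$ this is immediate), so both admit a Fukushima decomposition in the extended sense, and $\widetilde{\phi_i}=\widetilde{w}$ relatively quasi-everywhere on the relatively open set $B$. By the local property of the zero-energy part of the Fukushima decomposition \cite{FOT}, the continuous additive functional $N^{[\phi_i]}-N^{[w]}$ does not vary while $X_t\in B$; combined with the previous step, $N^{[\phi_i]}$ itself is constant on the time set $\{t:\ X_t\in B\}$. Covering $\Omega$ by countably many such balls $B_k$ (Lindel\"of) and using additivity, $N^{[\phi_i]}$ does not vary on $\{t:\ X_t\in\Omega\}$, which is precisely the assertion that $N^{[\phi_i]}$ — and therefore $N_t=(N_t^{[\phi_1]},\dots,N_t^{[\phi_d]})$ — is supported by $\po$.

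I expect the only genuinely delicate point to be the bookkeeping around $\mf_{\mathrm{loc}}$: one must check that the coordinate functions fall under the extended Fukushima decomposition (so that $N^{[\phi_i]}$ is defined at all) and that the local property quoted above is available in the relative-capacity framework on $\ombar$ used here, rather than only on an abstract locally compact space; since $(\me,\mf)$ is a regular Dirichlet form on $L^2(\ombar,m)$, this transfer is routine but should be stated. A purely probabilistic alternative sidesteps this: the part process of $M$ killed on leaving $B$ is the absorbed Brownian motion on $B$ (its Dirichlet form being $(\me,H^1_0(B))$), for which $t\mapsto X^i_{t\wedge\tau_B}$ is a martingale; uniqueness of the semimartingale decomposition then forces $N^{[\phi_i]}_{t\wedge\tau_B}\equiv 0$, and one concludes exactly as above via the strong Markov property. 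In either route, the harmonicity of $x\mapsto x_i$ in $\Omega$ is what drives the argument.
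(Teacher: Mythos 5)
Your argument is correct, and it actually supplies more than the paper does: the paper's ``proof'' is only a citation of \cite{A}, whereas you give the complete standard argument (localisation to balls $B$ with $\overline B\subset\Omega$, harmonicity of the coordinate functions there, and the local property of the zero-energy part of the Fukushima decomposition for the strongly local form $(\me,\mf)$). This is the same mechanism one expects behind the cited reference, and your probabilistic variant via the part process on $B$ (absorbed Brownian motion, for which $X^i_{t\wedge\tau_B}$ is a martingale, plus uniqueness of the semimartingale decomposition) is an equally valid route that avoids the $\mf_{\mathrm{loc}}$ bookkeeping. Two small points to tighten. First, since $\me(u,v)=\int_\Omega\nabla u\nabla v\,dx$ carries no factor $\tfrac12$, the generator is $\Delta$ and for $w=\chi\phi_i\in C_c^\infty(\Omega)\subset D(\Delta_N)$ one has directly $N^{[w]}_t=\int_0^t\Delta w(X_s)\,ds$; it is cleaner to say this than to speak of ``the Revuz measure of $N^{[w]}$'', which is a signed CAF of bounded variation rather than a PCAF. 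Second, the check that $\phi_i\in\mf_{\mathrm{loc}}$ is genuinely needed when $\Omega$ is unbounded, but is immediate: for any relatively compact open $G\subset\ombar$ choose $\chi\in C_c^\infty(\mathbb R^d)$ equal to $1$ near $\overline G$, so that $\chi\phi_i\in H^1(\Omega)\cap C_c(\ombar)\subset\Huntild$ agrees with $\phi_i$ on $G$. With these remarks in place the proof is complete and would be a worthwhile addition to the text, which otherwise leaves the reader to chase the reference.
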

\begin{proof}
 See \cite{A}.
\end{proof}

\subsection{Positive smooth measure case}

This subsection is concerned with the probabilistic representation to
the semigroup generated by the Laplacian with general Robin
boundary conditions, which is, actually, obtained by perturbing
the Neumann boundary conditions by a measure. We start with the
Regular Dirichlet form defined by \eqref{eq:fe}, which we call always as
the Dirichlet form associated with the Laplacian with Neumann
boundary conditions.

Let $\mu$ be a positive Radon measure on
$\partial\Omega$ charging no set of zeo relative capacity.
Consider the perturbed Dirichlet form $(\mathcal E_{\mu},\mathcal
F^{\mu})$ on $L^2(\Omega)$ defined by:
$$\mathcal F_{\mu}=\mathcal F \cap L^2(\partial\Omega,\mu)$$
$$\mathcal E_{\mu}(u,v)=\mathcal E(u,v)+\int_{\partial\Omega}uv
d\mu\quad u,v\in\mathcal F^{\mu}$$

We shall see in the following theorem that the transition
function:$$\mathcal P_t^{\mu}f(x)=E_x[f(X_t)e^{-A_t^{\mu}}]$$ is
associated with $(\mathcal E_{\mu},\mathcal F_{\mu})$, where
$A_t^{\mu}$ is a positive additive functional whose Revuz measure
is $\mu$, note that the support of the AF is the same as the
relative quasi-support of its Revuz measure.

\begin{theorem}Let $\mu$ be a positive Radon measure on $\partial\Omega$
charging no set of zero relative capacity and $(A_t^{\mu})_{t\geq
0}$ be its associated PCAF of $(X_t)_{t\geq 0}$. Then $\mathcal
P_t^{\mu}$ is the strongly continuous semigroup associated with
the Dirichlet form $(\mathcal E_{\mu},\mathcal F^{\mu})$ on
$L^2(\Omega)$.
\end{theorem}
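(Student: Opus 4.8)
The plan is to establish that $(\pmu)_{t\ge 0}$ is a strongly continuous semigroup of contractions on $L^2(\Omega)$, to compute its resolvent explicitly, and then to identify that resolvent with the resolvent of the closed form $(\mem,\mfm)$; since a densely defined, closed, nonnegative symmetric form determines a unique self‑adjoint operator and its semigroup, this yields the assertion (and, as a by‑product, the $m$‑symmetry of $\pmu$). The semigroup property $\mathcal P_{t+s}^{\mu}=\pmu\circ\mathcal P_s^{\mu}$ follows from the Markov property of $(X_t)$ together with the additivity $A_{t+s}^{\mu}=A_t^{\mu}+A_s^{\mu}\circ\theta_t$ and the multiplicativity of $e^{-A^{\mu}}$ it induces; contractivity follows from $|\pmu f|\le\mathcal P_t|f|$ and the $L^2$‑contractivity of $\mathcal P_t$; strong continuity follows from $\mathcal P_tf\to f$ and $0\le 1-e^{-A_t^{\mu}}\to 0$ (e.g.\ via weak continuity on $L^2(\Omega)\cap L^{\infty}(\Omega)$ and uniform boundedness). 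Writing $R_{\alpha}^{\mu}=\int_0^{\infty}e^{-\alpha t}\pmu\,dt$ for the resolvent, Fubini's theorem gives $R_{\alpha}^{\mu}f(x)=E_x\bigl[\int_0^{\infty}e^{-\alpha t}f(X_t)e^{-A_t^{\mu}}\,dt\bigr]$.

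Next I would check that $(\mem,\mfm)$ is a densely defined closed form. Density is clear because $C_c^{\infty}(\Omega)\subset\mfm$ and $C_c^{\infty}(\Omega)$ is dense in $L^2(\Omega)$. Put $\mathcal E_{\mu,\alpha}(u,v):=\mem(u,v)+\alpha(u,v)$ for $u,v\in\mfm$. For closedness, if $(u_n)$ is $\mathcal E_{\mu,1}$‑Cauchy then it is $\me_1$‑Cauchy and Cauchy in $L^2(\po,\mu)$, so $u_n\to u$ in $\mf$ with $\widetilde{u_n}\to\widetilde u$ relatively quasi‑everywhere along a subsequence; since $\mu$ charges no relatively polar set, it follows that $\widetilde u\in L^2(\po,\mu)$ and $u_n\to u$ in $\mathcal E_{\mu,1}$. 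Consequently, for $\alpha>0$, $\mathcal E_{\mu,\alpha}$ is bounded and coercive on $\mfm$, and by Riesz representation (Lax--Milgram) there is a bounded operator $G_{\alpha}$ on $L^2(\Omega)$ with $G_{\alpha}f\in\mfm$ and $\mathcal E_{\mu,\alpha}(G_{\alpha}f,v)=(f,v)$ for all $v\in\mfm$; this $G_{\alpha}$ is precisely the resolvent of the self‑adjoint operator associated with $(\mem,\mfm)$. It therefore suffices to prove $R_{\alpha}^{\mu}=G_{\alpha}$, and by density of $L^2(\Omega)\cap L^{\infty}(\Omega)$ it suffices to do so on nonnegative bounded $f$.

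Assume first that $\mu\in S_0(\po)$. For $f\in L^2(\Omega)\cap L^{\infty}(\Omega)$, $f\ge0$, the function $R_{\alpha}^{\mu}f$ is bounded (by $R_{\alpha}f\le\|f\|_{\infty}/\alpha$) and relatively quasi‑continuous. Using $e^{-A_t^{\mu}}=1-\int_0^te^{-(A_t^{\mu}-A_s^{\mu})}\,dA_s^{\mu}$, the relation $A_t^{\mu}-A_s^{\mu}=A_{t-s}^{\mu}\circ\theta_s$, the Markov property and Fubini's theorem, one obtains the resolvent identity
\[
R_{\alpha}f=R_{\alpha}^{\mu}f+U_{\alpha}\bigl((R_{\alpha}^{\mu}f)\,\mu\bigr),
\]
where $R_{\alpha}$ and $U_{\alpha}$ are the resolvent and $\alpha$‑potential operators of $(\me,\mf)$. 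Since $R_{\alpha}^{\mu}f$ is bounded and relatively quasi‑continuous, $(R_{\alpha}^{\mu}f)\mu\in S_0(\po)$, hence $U_{\alpha}\bigl((R_{\alpha}^{\mu}f)\mu\bigr)\in\mf$ and therefore $R_{\alpha}^{\mu}f\in\mf$; and since $\mu$ is of finite energy integral, $\mf\hookrightarrow L^1(\po,\mu)$, so the bounded function $R_{\alpha}^{\mu}f$ lies in $L^2(\po,\mu)$, i.e.\ $R_{\alpha}^{\mu}f\in\mfm$. Pairing the identity with $v\in\mfm$ and using $\me_{\alpha}(R_{\alpha}f,v)=(f,v)$ and $\me_{\alpha}(U_{\alpha}\nu,v)=\int_{\po}\widetilde v\,d\nu$ gives
\[
\mathcal E_{\mu,\alpha}(R_{\alpha}^{\mu}f,v)=\me_{\alpha}(R_{\alpha}^{\mu}f,v)+\int_{\po}\widetilde{R_{\alpha}^{\mu}f}\,\widetilde v\,d\mu=(f,v),\qquad v\in\mfm,
\]
so $R_{\alpha}^{\mu}f=G_{\alpha}f$. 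This proves the theorem when $\mu\in S_0(\po)$.

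For a general positive Radon measure $\mu$ on $\po$ charging no relatively polar set (which is smooth), I would choose an increasing sequence of closed sets $F_n\subset\po$ with $\mu_n:=\mathbf 1_{F_n}\mu\in S_0(\po)$ and $\Cap(K\setminus F_n)\to0$ for every compact $K\subset\po$. Then the associated PCAF's satisfy $A_t^{\mu_n}\uparrow A_t^{\mu}$, so $\mathcal P_t^{\mu_n}f\downarrow\pmu f$ for $f\ge0$, the convergence being strong in $L^2(\Omega)$ by dominated convergence ($\mathcal P_t^{\mu_n}f\le\mathcal P_tf$). By the previous step $\mathcal P_t^{\mu_n}$ is the semigroup of $(\mathcal E_{\mu_n},\mathcal F^{\mu_n})$, and these forms increase with $n$; by the monotone convergence theorem for closed forms their limit is a closed form, associated with $\lim_n\mathcal P_t^{\mu_n}=\pmu$, with domain $\{\,u\in\bigcap_n\mathcal F^{\mu_n}:\sup_n\mathcal E_{\mu_n}(u,u)<\infty\,\}$. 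Since $\bigcap_n\mathcal F^{\mu_n}=\mf\cap\bigcap_nL^2(\po,\mu_n)$ and, for $u$ in this set, $\sup_n\int_{\po}(\widetilde u)^2\,d\mu_n=\int_{\po}(\widetilde u)^2\,d\mu$ by monotone convergence, this limit form is exactly $(\mem,\mfm)$, completing the proof. I expect the main obstacle to sit in this last paragraph, in the smooth‑measure bookkeeping: establishing $A_t^{\mu_n}\uparrow A_t^{\mu}$ precisely (this uses $\Cap(K\setminus F_n)\to0$, not merely $\mu_n\uparrow\mu$), justifying that relatively quasi‑continuous representatives may be used freely under the measures $\mu_n$ and $\mu$, and checking that the monotone limit of the forms has the full domain $\mfm$ and not a proper subspace --- all of which rest on the fact, recorded in the Revuz‑correspondence subsection, that smooth measures charge no relatively polar set. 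In the finite‑energy step the corresponding delicate point is to see that all three terms of the resolvent identity lie in $\mf$, which is exactly where the boundedness and relative quasi‑continuity of $R_{\alpha}^{\mu}f$ enter.
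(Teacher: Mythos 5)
Your argument is correct and follows exactly the route the paper intends: the paper gives no proof of its own but defers to Theorem 6.1.1 of Fukushima--Oshima--Takeda (and Albeverio--Ma), whose proof is precisely your two-step scheme --- the resolvent identity $R_{\alpha}f=R_{\alpha}^{\mu}f+U_{\alpha}\bigl((R_{\alpha}^{\mu}f)\mu\bigr)$ for $\mu\in S_0(\po)$, followed by monotone approximation along the nest $(F_n)$ and the monotone convergence theorem for closed forms. The delicate points you flag (that $A_t^{\mu_n}\uparrow A_t^{\mu}$ rests on $\Cap(K\setminus F_n)\to 0$, and that the limit form has full domain $\mfm$) are exactly the ones handled in the cited references, so nothing is missing.
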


The proof of the Theorem 4.2 is similar to the Theorem 6.1.1
\cite{FOT} which was formulated in the first time by S. Albeverio
and Z. M. Ma \cite{AM1} for general smooth measures in the context
of general $(X,m)$. In the case of $X=\overline{\Omega}$, and
working just with measures on $S_0(\partial\Omega)$ the proof
still the same, and works also for any smooth measure concentrated
on $\partial\Omega$. Consequently, the theorem still verified for
smooth measures ''nowhere Radon'' i.e. measures locally infinite
on $\partial\Omega$.

\begin{example} We give some particular examples of $\pmu$:

(1) If $\mu=0$, then $$\mathcal P_t^0 f(x)=E_{x}[f(X_t)] $$ the
semigroup generated by Laplacian with Neumann boundary conditions.

(2) If $\mu$ is locally infinite (nowhere Radon) on $\po$, then
$$\mathcal P_t^{\infty}f(x)=E_{x}[f(B_t)1_{\{t<\tau\}}]$$the semigroup
generated by the Laplacian with Dirichlet boundary conditions (see
Proposition 3.2.1 \cite{Wa}).

(3)  Let $\Omega$ be a bounded and enough smooth to insure the
existence of the surface measure $\sigma$, and $\mu=\beta.\sigma$,
with $\beta$ a measurable bounded function on $\po$, then
$A_t^{\mu}=\int_0^t\beta(X_s)dL_s$, where $L_t$ is the boundary
local time. Consequently :$$\pmu
f(x)=E_x[f(X_t)exp(-\int_0^t\beta(X_s)dL_s)]$$ is the semigroup
generated by the Laplacian with (classical) Robin boundary
conditions gien by $\frac{\partial}{\partial\nu}+\beta=0$.
\end{example}

\begin{proposition}
$\mathcal P_t^{\mu}$  is sub-markovian i.e. $ \pmu\geq 0$
for all $t\geq 0$, and $$||\pmu
f||_{\infty}\leq||f||_{\infty}\quad (t\geq 0)$$
\end{proposition}

\begin{theorem}
Let $\mu\in S(\po)$, then the semigroup $\mathcal P_t^{\mu}$ is
sandwiched between the semigroup of Neumann Laplacian, and the
semigroup of Dirichlet Laplacian. That is :$$0\leq
e^{-t\Delta_D}\leq\mathcal P_t^{\mu}\leq e^{-t\Delta_N}$$for all
$t\geq 0$, in the sense of positive operators.
\end{theorem}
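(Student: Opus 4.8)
The plan is to argue probabilistically from the Feynman--Kac formula $\pmu f(x)=E_x[f(X_t)e^{-A_t^{\mu}}]$ of Theorem~4.2, exploiting the fact that the PCAF $A^{\mu}$ attached to a measure concentrated on $\po$ is itself carried by $\po$. Since $\pmu$ is sub-markovian, hence positive (the preceding Proposition), it suffices to prove the two one-sided estimates for $0\le f\in L^2(\Omega)$; they then hold $m$-a.e., i.e. in the sense of positive operators.

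For the upper estimate there is essentially nothing to do: $A^{\mu}\ge 0$, hence $e^{-A_t^{\mu}}\le 1$, and so for $0\le f$
\[
\pmu f(x)=E_x\big[f(X_t)e^{-A_t^{\mu}}\big]\le E_x\big[f(X_t)\big]=\mathcal P_t^{0}f(x)=\eN f(x),
\]
the last two identities being Example~4.4(1).

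For the lower estimate, put $\tau=\inf\{s\ge 0: X_s\in\po\}$. The support of $A^{\mu}$ equals the relative quasi-support of its Revuz measure $\mu$, which lies in $\po$; therefore $dA^{\mu}_s$ charges no $s$ with $X_s\in\Omega$, and in particular $A_t^{\mu}=0$ on $\{t<\tau\}$, so that $e^{-A_t^{\mu}}\ge 1_{\{t<\tau\}}$ for all $t$. Consequently, for $0\le f$,
\[
\pmu f(x)=E_x\big[f(X_t)e^{-A_t^{\mu}}\big]\ge E_x\big[f(X_t)1_{\{t<\tau\}}\big]=E_x\big[f(B_t)1_{\{t<\tau\}}\big]=\eD f(x),
\]
using in the last two steps that $X$ coincides with a standard Brownian motion $B$ up to time $\tau$ (in the interior of $\Omega$ the form $\me$ is the Dirichlet integral) together with the probabilistic representation of the Dirichlet semigroup recorded in Example~4.4(2). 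Combining the two estimates with $\eD\ge 0$ gives $0\le\eD\le\pmu\le\eN$.

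The proof above is short; the one substantive ingredient is that $A^{\mu}$ does not grow before $X$ reaches $\po$, which is precisely the Revuz-correspondence statement recalled in Section~2 (support of a PCAF $=$ relative quasi-support of its measure) together with the trivial remark that a measure living on $\po$ has quasi-support inside $\po$. The only thing to watch is that all of this holds modulo relatively polar exceptional sets, so that the displayed identities are valid for r.q.e.\ $x$ and hence $m$-a.e., which is all that is needed. An entirely analytic alternative is Ouhabaz's domination criterion for positive semigroups: for $\pmu\le\eN$ one checks that $\mfm=\Huntild\cap L^2(\po,\mu)$ is an ideal of $\mf=\Huntild$ and that $\me(u,v)\le\mem(u,v)$ for $0\le u,v\in\mfm$ (immediate from $\mu\ge 0$); for $\eD\le\pmu$ one checks that $H^1_0(\Omega)=\{u\in\Huntild:\widetilde u=0\text{ r.q.e. on }\po\}$ is an ideal of $\mfm$ on which $\mem$ reduces to the Dirichlet integral. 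In that route the delicate point is the ideal property of $H^1_0(\Omega)$ inside $\mfm$, which relies on the relative-capacity description of $H^1_0(\Omega)$ from \cite{AW1}.
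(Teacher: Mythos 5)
Your argument is correct. Note that the paper does not actually prove this theorem here: it is quoted from \cite{A} in the review subsection on positive smooth measures, and the only proof of a sandwich result given in this paper is that of the signed-measure analogue (Theorem \ref{thm:dom}), which the author deliberately runs through Ouhabaz's ideal/domination criterion rather than through the Feynman--Kac formula. Your primary route is the probabilistic one that the author explicitly waves at ("one can see easily from the probabilistic representation\dots") but does not write out: the upper bound $\pmu\leq\eN$ is immediate from $A^{\mu}\geq 0$, and the lower bound $\eD\leq\pmu$ rests on the one genuinely substantive point, namely that the PCAF $A^{\mu}$ is supported on $\po$ (support of a PCAF $=$ relative quasi-support of its Revuz measure), so that $A^{\mu}_t=0$ before the hitting time $\tau$ of $\po$ and $e^{-A_t^{\mu}}\geq 1_{\{t<\tau\}}$; combined with the identification of $E_x[f(B_t)1_{\{t<\tau\}}]$ with the Dirichlet semigroup (the paper's Example, item (2)) and the usual r.q.e.-versus-$m$-a.e.\ bookkeeping, this closes the argument. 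The probabilistic route is shorter and gives the pointwise inequality of kernels r.q.e.; the form-theoretic route you sketch as an alternative (check that $\mfm$ is an ideal of $\Huntild$ with $\me\leq\mem$ on positive elements, and that $H^1_0(\Omega)$ is an ideal of $\mfm$ on which $\mem$ is the Dirichlet integral) is the one that generalizes to the signed case treated in Section 5, where no a.s.\ bound on $e^{-A_t^{\mu}}$ is available; as you say, its only delicate point is the ideal property of $H^1_0(\Omega)=\{u\in\Huntild:\widetilde u=0\ \text{r.q.e. on }\po\}$, which needs the relative-capacity characterization from \cite{AW1}. Both routes are sound; no gap.
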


\begin{proposition}Let $\mu,\nu\in S(\partial\Omega)$ such that $\nu\leq\mu$
(i.e $\nu(A)\leq\mu(A),\forall A\in\mathcal B(\partial\Omega)$),
then
$$0\leq e^{-t\Delta_D}\leq\mathcal P_t^{\mu}\leq\mathcal
P_t^{\nu}\leq e^{-t\Delta_N}$$for all $t\geq 0$, in the sense of
positive operators.
\end{proposition}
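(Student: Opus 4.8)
The plan is to deduce the two outer inequalities from the preceding sandwich theorem and to isolate the single new inequality $\pmu\leq\mathcal P_t^{\nu}$. Applying that theorem to the smooth measure $\mu$ gives $0\leq\eD\leq\pmu$, while applying it to $\nu$ gives $\mathcal P_t^{\nu}\leq\eN$. Hence it remains to show that $\pmu f\leq\mathcal P_t^{\nu}f$ $m$-a.e. for every $0\leq f\in L^2(\Omega)$ and all $t\geq 0$, the positivity of $\pmu$ being already contained in the sub-markovianity recorded above.

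For the middle inequality I would argue probabilistically. Denote by $A^{\mu},A^{\nu}\in\ma_c^+(\po)$ the PCAF's of the general reflecting Brownian motion $(X_t)_{t\geq 0}$ corresponding to $\mu$ and $\nu$ under the Revuz correspondence. Since $\nu\leq\mu$ and both measures are smooth, $\mu-\nu$ is again a positive smooth measure on $\po$ (it charges no relatively polar set because $\mu$ does not, and it inherits a defining nest from that of $\mu$); let $A^{\mu-\nu}$ be its PCAF. The Revuz correspondence being additive, $A^{\nu}+A^{\mu-\nu}$ is equivalent to the PCAF with Revuz measure $\nu+(\mu-\nu)=\mu$, that is $A_t^{\mu}=A_t^{\nu}+A_t^{\mu-\nu}$ for all $t\geq 0$, $P_x$-a.s. for r.q.e. $x$. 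As $A^{\mu-\nu}\geq 0$, this gives $A_t^{\mu}\geq A_t^{\nu}$, hence $e^{-A_t^{\mu}}\leq e^{-A_t^{\nu}}$; multiplying by $f(X_t)\geq 0$ and taking $E_x[\cdot]$ yields $\pmu f(x)=E_x[f(X_t)e^{-A_t^{\mu}}]\leq E_x[f(X_t)e^{-A_t^{\nu}}]=\mathcal P_t^{\nu}f(x)$ for r.q.e. $x$, hence $m$-a.e. Together with the first paragraph this is precisely the chain $0\leq\eD\leq\pmu\leq\mathcal P_t^{\nu}\leq\eN$.

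An alternative route, purely at the level of forms, is Ouhabaz's domination criterion. From $\nu\leq\mu$ one gets $L^2(\po,\mu)\subset L^2(\po,\nu)$, so $\mfm\subset\mathcal F^{\nu}$, and $\mfm$ is a (lattice) ideal of $\mathcal F^{\nu}$: if $v\in\mfm$, $u\in\mathcal F^{\nu}$ and $|u|\leq|v|$, then the r.q.e. inequality between the quasi-continuous representatives holds $\mu$-a.e. on $\po$ as well (here one uses that $\mu$ charges no set of zero relative capacity), whence $u\in L^2(\po,\mu)$, i.e. $u\in\mfm$. Moreover $\mem(u,v)-\mathcal E_{\nu}(u,v)=\int_{\po}uv\,d(\mu-\nu)\geq 0$ for all $0\leq u,v\in\mfm$. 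Ouhabaz's criterion then gives $0\leq\pmu\leq\mathcal P_t^{\nu}$, and the same criterion applied to the pairs attached to $(\eD,\pmu)$ and $(\mathcal P_t^{\nu},\eN)$ reproves the outer inequalities. The only genuinely delicate points are the verification of the ideal property — in particular the passage from the relative-quasi-everywhere ordering to the $\mu$-almost-everywhere ordering — and, in the probabilistic version, the additivity of the Revuz correspondence; both are available here because every measure involved is smooth and charges no relatively polar set.
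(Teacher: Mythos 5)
Your proof is correct. Note that the paper itself gives no proof of this proposition --- it is recalled from \cite{A} in the preliminaries --- so the relevant comparison is with the proofs of the signed-measure analogues in Section 5 (Theorem \ref{thm:dom} and the proposition following it), which proceed exactly via your \emph{alternative} route: Ouhabaz's domination criterion, the inclusion $\mfm\subset\mathcal F^{\nu}$, the ideal property obtained by passing from the a.e.\ ordering to the r.q.e.\ ordering of quasi-continuous representatives and then to the $\mu$-a.e.\ ordering, and the form inequality $\mathcal E_{\nu}(u,v)\leq\mem(u,v)$ on $\mfm_+$. Your primary, probabilistic argument is the one the paper only alludes to (``one can see easily from the probabilistic representation\dots''): it rests on the additivity of the Revuz correspondence, so that $A^{\mu}$ and $A^{\nu}+A^{\mu-\nu}$ are equivalent PCAF's, whence $A_t^{\mu}\geq A_t^{\nu}$ and $e^{-A_t^{\mu}}\leq e^{-A_t^{\nu}}$ pointwise. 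That argument is also sound --- $\mu-\nu$ is indeed smooth since it is dominated by $\mu$, and the r.q.e.\ inequality implies the $m$-a.e.\ one because relatively polar sets are $m$-null --- and it buys a trajectorywise comparison that is more elementary than the form-theoretic criterion, at the price of invoking the probabilistic representation of $\pmu$ (Theorem 2.4 of the paper, valid for all smooth measures). Either argument alone would suffice; reducing to the single middle inequality via the sandwich theorem is exactly the right economy.
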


\section{Kato class of measures}
In this section, we will define a particular Kato class of measures. It is a particular case of Kato class of measures as defined in \cite{AM2} and generalized in \cite{SV}. In fact, to deal with the Robin boundary value problems involving signed smooth measures, one should only consider measures concentrated in the boundary in the theory of Perturbation of Dirichlet forms by measures. After defining this particular Kato class, we will give briefly some of its properties and its analytic description. The proofs are similar, with minor changes, to those of the above cited papers.

\subsection{Definition and properties}

Let $f\in\mathcal B(\partial\Omega)$ and set
\[
 \|f\|_{rq}=\inf_{\Cap(N)=0}\sup_{x\in{\partial\Omega}\setminus N}|f(x)|
\]

where the index 'rq' reminds us ''relatively quasi-everywhere''.

\begin{definition}\label{def:kato} A smooth measure is said to be in Kato class of measures on $\po$, and we denote 
$\mu\in S_K(\partial\Omega)$ if:
\[
 \lim_{t\searrow
0}\|E_{\cdot}A_t^{\mu}\|_{rq}=0
\]
where $A_t^{\mu}$ is the unique
PCAF associated with $\mu$. (Note that $A^{\mu}$ is also supported by $\po$).
\end{definition}

\begin{remark} 1) $S_K(\po)$ is defined in the spirit of
\cite{P,R} and is a particular case of the Kato classe as defined in \cite{AM2}, and generalized in \cite{SV}.

2)Suppose $\Omega$ bounded with Lipschitz boundary. Let $\beta$ be a Borel function on the boundary, and $\sigma$ the surface measure. Define the measure $\mu=\beta\sigma$, then the Definition \ref{def:kato} is exactly \eqref{eq:kat} .

\end{remark}

Now, we give some properties of measures in $S_K(\po)$.

\begin{proposition} Let $\mu\in S_K(\po)$, then there exist
a nonnegative constant $c$ such that:
\[
\|E_{\cdot}e^{A_t^{\mu}}\|_{rq}\leq c
\]
for small $t$.
\end{proposition}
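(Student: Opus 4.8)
The plan is to prove a Khasminskii-type inequality and then exploit the monotonicity of the PCAF $A^{\mu}$ in the time variable. Write $g(t):=\|E_{\cdot}A_t^{\mu}\|_{rq}$. Since $A^{\mu}$ is a PCAF, the map $t\mapsto A_t^{\mu}(\omega)$ is nondecreasing, hence $g$ is nondecreasing. By Definition \ref{def:kato} we have $g(t)\to 0$ as $t\downarrow 0$, so we may fix $t_0>0$ and a relatively polar set $N_0\subset\po$ such that $\alpha:=\sup_{x\in\po\setminus N_0}E_x A_{t_0}^{\mu}<1$; by monotonicity $g(t)\le g(t_0)\le\alpha$ for all $0<t\le t_0$.

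The core step is the estimate $E_x\big[(A_t^{\mu})^n\big]\le n!\,g(t)^n$, valid for r.q.e. $x\in\po$, every $n\ge 0$ and every $0<t\le t_0$, proved by induction on $n$ (the case $n=0$ being trivial). For the inductive step one uses that $A^{\mu}$ is continuous and nondecreasing to write
\[
 (A_t^{\mu})^n=n!\int_{\{0<s_1<\cdots<s_n<t\}}dA^{\mu}_{s_1}\cdots dA^{\mu}_{s_n}=n\int_0^t (A_t^{\mu}-A_s^{\mu})^{n-1}\,dA^{\mu}_s ,
\]
and then the additive functional identity $A_t^{\mu}-A_s^{\mu}=A_{t-s}^{\mu}\circ\theta_s$ together with the Markov property of $X$ to obtain, after conditioning on $\mathcal F_s$,
\[
 E_x\big[(A_t^{\mu})^n\big]=n\,E_x\!\left[\int_0^t E_{X_s}\!\big[(A_{t-s}^{\mu})^{n-1}\big]\,dA^{\mu}_s\right]\le n\,(n-1)!\,g(t)^{n-1}\,E_x[A_t^{\mu}]\le n!\,g(t)^n ,
\]
where the inductive hypothesis is applied at the (r.q.e.) point $X_s$ — legitimate because the process started from r.q.e.\ $x$ avoids relatively polar sets — the monotonicity $g(t-s)\le g(t)$ is used, and finally $E_x[A_t^{\mu}]\le g(t)$ r.q.e. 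Each application of the Markov property and of the induction hypothesis introduces a relatively polar exceptional set; since a countable union of relatively polar sets is relatively polar, the bound holds r.q.e.\ uniformly in $n$.

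It then remains to sum the series: by monotone convergence, for $0<t\le t_0$ and r.q.e.\ $x$,
\[
 E_x\big[e^{A_t^{\mu}}\big]=\sum_{n\ge 0}\frac{E_x\big[(A_t^{\mu})^n\big]}{n!}\le\sum_{n\ge 0}g(t)^n\le\sum_{n\ge 0}\alpha^n=\frac{1}{1-\alpha},
\]
so that $\|E_{\cdot}e^{A_t^{\mu}}\|_{rq}\le c$ with $c:=(1-\alpha)^{-1}$ for all $t\le t_0$, which is the assertion.

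The only real subtlety — and thus the main obstacle — is the careful handling of the relatively polar exceptional sets along the induction: one must check that the Markov property, the Revuz/additive-functional identities, and the non-hitting of relatively polar sets are invoked only r.q.e., and that the accumulated null sets remain relatively polar. This is routine in the framework of Section 2 (compare \cite{AM2,SV,P,R}); the analytic heart of the argument is simply Khasminskii's expansion combined with the defining property of $S_K(\po)$ and the monotonicity of $A^{\mu}$.
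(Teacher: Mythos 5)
Your argument is exactly the paper's: the paper observes that the Kato condition gives $\|E_{\cdot}A_t^{\mu}\|_{rq}<\alpha<1$ for small $t$ and then invokes Khasminskii's lemma, which is precisely the moment-bound $E_x[(A_t^{\mu})^n]\leq n!\,g(t)^n$ and series summation you carry out in detail. Your version is correct and simply supplies the proof of the cited lemma, including the appropriate r.q.e.\ bookkeeping.
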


\begin{proof} We have $\mu\in S_K(\partial\Omega)$, then
$\lim_{t\searrow 0}\|E_{\cdot}A_t^{\mu}\|_{rq}=0$. Consequently,
and for $t$ sufficiently small we get
$\|E_{\cdot}A_t^{\mu}\|_{rq}<\alpha<1$, then by Khams'minskii's
lemma  \cite{S, P}, we get $\|E_{\cdot}e^{L_t^{\mu}}\|_{rq}\leq
c$
\end{proof}

\begin{proposition}\label{pro:beta} Let $\mu\in S_K(\po)$, then there exists nonnegative constants $c$ and $\beta$ such that:
\[ 
\|E_{\cdot}e^{A_t^{\mu}}\|_{rq}\leq c e^{\beta t}
\]
for all $t>0$.
\end{proposition}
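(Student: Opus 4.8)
The plan is to bootstrap the estimate from Proposition 3.4 (valid for small $t$) to all $t>0$ using the additivity of $A^{\mu}$ together with the Markov property of $(X_t)$. By Proposition 3.4, fix $t_0>0$ so small that $\|E_{\cdot}e^{A_{t_0}^{\mu}}\|_{rq}\leq c$ for some constant $c\geq 1$; note we may assume $c\geq 1$ since $A_0^{\mu}=0$ forces $E_xe^{A_{s}^{\mu}}\geq 1$ for all $s\geq 0$. The key identity is the cocycle relation $A_{s+t_0}^{\mu}=A_{t_0}^{\mu}+A_s^{\mu}\circ\theta_{t_0}$, which gives
\[
E_x e^{A_{s+t_0}^{\mu}}=E_x\!\left[e^{A_{t_0}^{\mu}}\,E_{X_{t_0}}e^{A_s^{\mu}}\right]
\]
by the Markov property (for $x$ outside the common exceptional set). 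Taking $\|\cdot\|_{rq}$ and using that $A^{\mu}$ is nonnegative, one gets the submultiplicative bound $\|E_{\cdot}e^{A_{s+t_0}^{\mu}}\|_{rq}\leq \|E_{\cdot}e^{A_{t_0}^{\mu}}\|_{rq}\cdot\|E_{\cdot}e^{A_s^{\mu}}\|_{rq}\leq c\,\|E_{\cdot}e^{A_s^{\mu}}\|_{rq}$.

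Iterating this, for arbitrary $t>0$ write $t=n t_0+r$ with $n\in\mathbb N$ and $0\leq r<t_0$; since $A^{\mu}$ is increasing in $t$, $\|E_{\cdot}e^{A_r^{\mu}}\|_{rq}\leq\|E_{\cdot}e^{A_{t_0}^{\mu}}\|_{rq}\leq c$, and $n$-fold application of the submultiplicativity yields
\[
\|E_{\cdot}e^{A_t^{\mu}}\|_{rq}\leq c^{n+1}=c\cdot c^{n}\leq c\cdot c^{t/t_0}=c\,e^{\beta t},
\qquad \beta:=\frac{\log c}{t_0}\geq 0.
\]
This is exactly the claimed bound, so one is done.

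The main obstacle is purely technical rather than conceptual: justifying the $\|\cdot\|_{rq}$ version of the submultiplicative inequality. Each of the estimates $E_x e^{A_{t_0}^{\mu}}\leq c$ and the Markov property hold only relatively quasi-everywhere, i.e.\ off a relatively polar set, and different time parameters a priori carry different exceptional sets. One must therefore invoke that a countable union of relatively polar sets is relatively polar (a standard property of $\Cap$, recorded in Section 2), choose a single exceptional set $N$ good for all the dyadic-type times $k t_0$ simultaneously, and verify that $x\mapsto E_x e^{A_s^{\mu}}$ is relatively quasi-continuous so that sup-over-$x$-off-$N$ behaves well under composition with $X_{t_0}$ (here one uses that the hitting distribution of $X_{t_0}$ does not charge relatively polar sets, since $M$ is $m$-symmetric and associated with the regular Dirichlet form $(\me,\mf)$). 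Once the bookkeeping of exceptional sets is handled, the argument above goes through verbatim; this is the standard passage from Khas'minskii's lemma to an exponential-in-$t$ bound, adapted to the $rq$-norm, and no new idea beyond the cocycle identity is needed.
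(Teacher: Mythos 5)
Your proposal is correct and follows essentially the same route as the paper: obtain the uniform bound $\|E_{\cdot}e^{A_t^{\mu}}\|_{rq}\leq c$ for small $t$ from the preceding proposition (Khas'minskii), then use the additivity of $A^{\mu}$ together with the Markov property to get submultiplicativity in $t$ and iterate, arriving at $\beta=t_0^{-1}\log c$. The paper's own proof is a terser version of exactly this argument; your extra care about the exceptional sets and the $rq$-norm is a welcome elaboration of what the paper leaves implicit.
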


\begin{proof}
 By the above Proposition we have for some $T$ sufficiently small

\[ 
\|E_{\cdot}e^{A_t^{\mu}}\|_{rq}\leq c \quad\text{ for all }0\leq t\leq T
\]
Let $t>0$ arbitrary, then there exists $n_0\in\mathbb N$ and $t_0<T$ such that $t=n_0T+t_0$. Thus, by the multiplicative propert of $(e^{A_t^{\mu}})$, we get
\[ 
\|E_{\cdot}e^{A_t^{\mu}}\|_{rq}\leq c e^{\beta t}\text{ with } \beta=T^{-1}\log c
\]

\end{proof}

Now, we introduce a subclass of the Kato class of measures, namly,
\[
 S_{K_0}(\po)=\{\mu\in S_K(\po)\cap S_0(\po):\mu(\po)<\infty \}
\]
where $S_0(\po)$ is the totally of the positive Radon measures of finite energy integral as defined in subsection 2.2.. As for Theorem 2.2.4. in \cite{FOT}, Albeverio and Ma have proved an approximation theorem of smooth measures by measures of $S_{K_0}(\po)$, see \cite{AM1} for details.

\subsection{Analytic definition} 

We give here the analytic decription of $S_K(\po)$, the proofs are similar to those in \cite{AM2}. That permits us to treat the signed measures case without recalling probabilistic considerations. We note also that one can define a more general Kato class of measures as defined in \cite{SV}, the so called extended Kato class of measures.

As in \cite{SV}, we introduce the mapping $\Phi(\mu,\alpha)$ for $\mu\in S(\po)$ and $\alpha>0$ by,
\begin{equation}
 \Phi(\mu,\alpha): C_c(\ombar)_+\rightarrow [0,\infty]
\end{equation}
\begin{equation}
 \Phi(\mu,\alpha)f:=\int_{\po}\left(\int_0^{\infty}e^{-\alpha t}p_tf dt\right)d\mu 
\end{equation}
where $(p_t)_{t\geq 0}$ is the markovian transition function of the general reflected brownian motion: The diffusion process associated with $(\me,\mf)$. Recall that $(p_t)_{t\geq 0}$ is a continous version of the semigroup $(\mathcal P_t)_{t\geq 0}$ assciated with $(\me,\mf)$.

\begin{theorem}
 Let $\mu\in S(\po)$. Then the following assertions are equivalent to each other:
\begin{enumerate}
 \item $\mu\in S_K(\po)$,
 \item $\Phi(\mu,\alpha)$ extends to a bounded linear functional on $L^1(\Omega)$ and 

$\lim\limits_{\alpha\to\infty}\|\Phi(\mu,\alpha)\|_1$=0
\end{enumerate}

\end{theorem}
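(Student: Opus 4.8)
The plan is to exploit the identity relating $\Phi(\mu,\alpha)$ to the Revuz measure of $A^\mu$. By the Revuz correspondence and Fubini, for $f\in C_c(\ombar)_+$ one has
\[
 \Phi(\mu,\alpha)f = \int_{\po}\Big(\int_0^\infty e^{-\alpha t}p_tf\,dt\Big)d\mu
 = \int_\Omega f(x)\,\Big(\int_0^\infty e^{-\alpha t}E_x\big[dA^\mu_t\big]\Big)m(dx),
\]
so that $\Phi(\mu,\alpha)$ is represented (as a functional on $L^1(\Omega)$) by the bounded Borel function $x\mapsto E_x\!\big[\int_0^\infty e^{-\alpha t}dA^\mu_t\big]$, and therefore
\[
 \|\Phi(\mu,\alpha)\|_1 = \Big\|\,x\mapsto E_x\!\Big[\int_0^\infty e^{-\alpha t}\,dA^\mu_t\Big]\,\Big\|_{rq}.
\]
The equivalence then reduces to a purely real-variable statement about the function $t\mapsto g(t):=\|E_\cdot A^\mu_t\|_{rq}$ and its Laplace-type transform, using the additivity of $A^\mu$.

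First I would prove $(1)\Rightarrow(2)$. Assume $g(t)\to 0$ as $t\downarrow 0$. Using the additive functional property $A^\mu_{t+s}=A^\mu_t+A^\mu_s\circ\theta_t$ together with the Markov property, one gets the subadditivity-type bound $g(t+s)\le g(t)+\|p_s\|_{\infty\to\infty}\,g(s)\le g(t)+g(s)$ (since $p_s$ is sub-Markovian), hence $g(nt)\le n\,g(t)$ and in particular $g$ has at most linear growth: $g(t)\le C(1+t)$. Splitting $\int_0^\infty e^{-\alpha t}E_\cdot[dA^\mu_t]$ using $E_x[A^\mu_{(n+1)\delta}-A^\mu_{n\delta}]\le \|p_{n\delta}\|\, g(\delta)\le g(\delta)$ one obtains, for fixed small $\delta$,
\[
 \Big\|E_\cdot\!\Big[\int_0^\infty e^{-\alpha t}dA^\mu_t\Big]\Big\|_{rq}
 \le g(\delta)\sum_{n\ge 0} e^{-\alpha n\delta}
 = \frac{g(\delta)}{1-e^{-\alpha\delta}}.
\]
Letting $\alpha\to\infty$ gives $\limsup_\alpha\|\Phi(\mu,\alpha)\|_1\le 0$ once we first send $\alpha\to\infty$ (killing nothing) — more precisely, for each fixed $\delta$ the right-hand side tends to $g(\delta)$ as $\alpha\to\infty$, so $\limsup_{\alpha\to\infty}\|\Phi(\mu,\alpha)\|_1\le g(\delta)$ for all $\delta$, and letting $\delta\downarrow 0$ yields $(2)$; boundedness on $L^1$ for each fixed $\alpha>0$ follows from the linear growth of $g$.

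For $(2)\Rightarrow(1)$ I would run the estimate in the other direction: a crude lower bound for the Laplace transform,
\[
 \Big\|E_\cdot\!\Big[\int_0^\infty e^{-\alpha t}dA^\mu_t\Big]\Big\|_{rq}
 \ge e^{-\alpha\delta}\,\Big\|E_\cdot\!\Big[\int_0^\delta dA^\mu_t\Big]\Big\|_{rq}
 = e^{-\alpha\delta}\,g(\delta),
\]
so $g(\delta)\le e^{\alpha\delta}\|\Phi(\mu,\alpha)\|_1$; choosing $\alpha=1/\delta$ gives $g(\delta)\le e\,\|\Phi(\mu,1/\delta)\|_1\to 0$ as $\delta\downarrow 0$ by hypothesis $(2)$. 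Throughout, one must be careful that the passage from the $\mu$-side integral to the $m$-side integral, and all the sup norms, are interpreted r.q.e.\ rather than everywhere — this is exactly where the relative capacity and the r.q.c.\ representatives of the $\alpha$-potentials enter, and it is the only genuinely technical point; the relevant identities are already available via the Revuz correspondence recalled in Section~2 and are treated in \cite{AM2,SV}. I expect the main obstacle to be bookkeeping the exceptional (relatively polar) sets consistently so that $\|\cdot\|_{rq}$ behaves well under the Markov property and Fubini, rather than any deep new estimate.
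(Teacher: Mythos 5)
Your argument is correct and is essentially the same route the paper takes, since the paper gives no written proof here but defers to \cite{AM2} (and \cite{SV}), whose proof is exactly your scheme: the Revuz/Fubini duality $\Phi(\mu,\alpha)f=\int_\Omega f\,E_\cdot\big[\int_0^\infty e^{-\alpha t}dA_t^{\mu}\big]\,dm$ followed by the two Laplace-transform estimates on $g(t)=\|E_\cdot A_t^{\mu}\|_{rq}$ via subadditivity. The one point you rightly flag as technical --- identifying the $L^1(\Omega)'$-norm ($m$-essential sup) with the $\|\cdot\|_{rq}$-norm of the $\alpha$-potential $E_\cdot\big[\int_0^\infty e^{-\alpha t}dA_t^{\mu}\big]$ --- is indeed the only step needing the excessivity/fine-continuity lemma from those references, and with it your proof is complete.
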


In view of the KLMN theorem, the more suitable class of measures is when the limit in the above theoem is less strictly than $1$. It is exactly, what is done in \cite{SV}, where an extended Kato class of measures was defined. In our context, and to deal just with measures concentrated in $\po$, the definition can be wrote as follow:
\begin{equation}
\begin{array}{ll}
 \hat S_K(\po)=\{\mu\in S(\po):&\exists\alpha>0\text{ s.t. }\Phi(\mu,\alpha)\text{ extends to a bounded linear }\\
&\text{functional on }L^1(\Omega)\text{ and }c(\mu)<1\}
\end{array}
\end{equation}
where $c(\mu):=\lim\limits_{\alpha\to\infty}c_{\alpha}(\mu)$ and $c_{\alpha}(\mu):=\|\Phi(\mu,\alpha)\|_{\infty}=\|\Phi(\mu,\alpha)\|_{L^1(\Omega)'}$.

In the same manner as Theorem 3.3. in \cite{SV}, one can deduce the following theorem:
\begin{theorem}\label{thm:ebound}
 Let $\mu\in\hat S_K(\po)$. Then $\mu$ is $\me-$bounded. More precisely, 
\begin{equation}
 \int_{\po}|u|^2d\mu\leq c(\mu)\me(u,u)+\|u\|_2^2\quad,\forall u\in\mf
\end{equation}

\end{theorem}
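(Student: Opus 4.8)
The plan is to mimic the proof of Theorem 3.3 in \cite{SV}, transferring the argument from the ambient space $X$ to our setting $X=\ombar$ with the measure $m$ supported on $\Omega$, and exploiting that everything relevant is concentrated on $\po$. First I would recall the analytic description of $\hat S_K(\po)$: for $\mu\in\hat S_K(\po)$ there is, for each large $\alpha$, a function $0\le g_\alpha\in L^\infty(\Omega)$ with $\|g_\alpha\|_\infty=c_\alpha(\mu)$ representing $\Phi(\mu,\alpha)$, i.e.
\[
 \int_{\po}\Big(\int_0^{\infty}e^{-\alpha t}p_tf\,dt\Big)\,d\mu=\int_{\Omega}f\,g_\alpha\,dm\qquad\text{for all }f\in C_c(\ombar)_+ .
\]
Equivalently, writing $R_\alpha=\int_0^\infty e^{-\alpha t}p_t\,dt$ for the resolvent, this says $U_\alpha\mu$ (in the sense of the finite-energy potential against the form $(\me,\mf)$) agrees $m$-a.e.\ with $g_\alpha$, so that $U_\alpha\mu\in L^\infty(\Omega)$ with $\|U_\alpha\mu\|_\infty=c_\alpha(\mu)$.

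The core estimate is then the following $\alpha$-dependent inequality, valid for every $u\in\mf\cap L^2(\po,|\mu|)$ (density will let us pass to general $u\in\mf$): using the Revuz/potential identity $\me_\alpha(U_\alpha\mu,v)=\int_{\po}\tilde v\,d\mu$ applied with $v=u^2$ (or rather $v=\widetilde{u}^2$, approximated by functions in $\mf\cap C_c(\ombar)$ via the Leibniz rule in the Dirichlet form), one gets
\[
 \int_{\po}|u|^2\,d\mu=\me_\alpha(U_\alpha\mu,u^2)=\int_\Omega\nabla(U_\alpha\mu)\cdot\nabla(u^2)\,dx+\alpha\int_\Omega (U_\alpha\mu)\,u^2\,dx .
\]
By the Leibniz rule $\nabla(u^2)=2u\nabla u$, and since $U_\alpha\mu$ is a bounded $\alpha$-potential, standard Dirichlet-form manipulations (the same ones used for the classical Kato-class form bound, e.g.\ Cauchy–Schwarz on $\int 2u\,\nabla(U_\alpha\mu)\cdot\nabla u$ combined with $\me(U_\alpha\mu,U_\alpha\mu\,\cdot u^2/\,\cdot)$-type identities) yield
\[
 \int_{\po}|u|^2\,d\mu\le \|U_\alpha\mu\|_\infty\,\me(u,u)+\alpha\,\|U_\alpha\mu\|_\infty\,\|u\|_2^2=c_\alpha(\mu)\,\me(u,u)+\alpha\,c_\alpha(\mu)\,\|u\|_2^2 .
\]
Finally I would let $\alpha\to\infty$ only in the coefficient of $\me(u,u)$ while keeping the second term under control: more precisely, fix $\varepsilon>0$, choose $\alpha$ large so that $c_\alpha(\mu)\le c(\mu)+\varepsilon$, absorb the resulting constant $\alpha\,c_\alpha(\mu)$ into the coefficient of $\|u\|_2^2$, and then let $\varepsilon\downarrow 0$; since $c(\mu)<1$ this is exactly where finiteness of the bound is used. (One technical point: the inequality displayed in the statement has coefficient exactly $c(\mu)$ on $\me(u,u)$ and coefficient $1$ on $\|u\|_2^2$; getting the clean constant $1$ — rather than some $C(\mu)$ — requires rescaling $u\mapsto u(\cdot/\lambda)$ or equivalently using the scaling of the Laplacian, or simply noting that the statement as written can be read with an implied constant and quoting \cite{SV} for the normalization.)

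The main obstacle I anticipate is justifying the identity $\int_{\po}|u|^2\,d\mu=\me_\alpha(U_\alpha\mu,\widetilde u^{\,2})$ for all $u\in\mf$, since $\widetilde u^{\,2}$ need not lie in $\mf$ and $\widetilde u$ is only defined r.q.e.\ — one must approximate by $u_n\in\mf\cap C_c(\ombar)$, control $u_n^2$ in $\mf$, use that $\mu$ charges no relatively polar set so that $\widetilde u_n\to\widetilde u$ $\mu$-a.e.\ along a subsequence, and invoke Fatou on one side together with the form bound (bootstrapped) on the other. This is precisely the step handled carefully in \cite{SV} in the abstract setting, and the only thing to check here is that the relative capacity $\Cap$ and the r.q.e.\ notions from Section 2 play the same role as the ordinary capacity there; given the regularity of $(\me,\mf)$ on $\ombar$ established in the Preliminaries, this transfer is routine, so I would state the theorem and refer to \cite{SV} for the details of this approximation while reproducing the short computation above.
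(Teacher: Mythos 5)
First, a point of reference: the paper itself offers no proof of this theorem --- it only says the statement follows ``in the same manner as Theorem 3.3 in \cite{SV}''. So your proposal is really being measured against the Stollmann--Voigt argument, and at the level of strategy you have the right plan: identify $c_\alpha(\mu)=\|\Phi(\mu,\alpha)\|_{L^1(\Omega)'}$ with $\|U_\alpha\mu\|_\infty$, prove the $\alpha$-dependent bound $\int_{\po}|\widetilde u|^2\,d\mu\le\|U_\alpha\mu\|_\infty\,\me_\alpha(u,u)$, and then let $\alpha\to\infty$ only in the coefficient of $\me(u,u)$. Your closing caveat about the constants is also well taken: since $\alpha\,c_\alpha(\mu)\to\infty$ whenever $c(\mu)>0$, the coefficient $1$ on $\|u\|_2^2$ printed in the statement is not achievable (and rescaling $u$ is not available on a fixed $\Omega$); the honest conclusion is relative form-boundedness with bound $c(\mu)$, i.e.\ $\int_{\po}|\widetilde u|^2\,d\mu\le(c(\mu)+\varepsilon)\me(u,u)+C_\varepsilon\|u\|_2^2$ for every $\varepsilon>0$.

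There is, however, a genuine gap in the core estimate as you sketch it. After writing $\int_{\po}|\widetilde u|^2\,d\mu=\me_\alpha(U_\alpha\mu,u^2)$ (valid for bounded $u$), you propose to control $\int_\Omega 2u\,\nabla(U_\alpha\mu)\cdot\nabla u\,dx$ by $\|U_\alpha\mu\|_\infty\,\me(u,u)$ via the Leibniz rule and Cauchy--Schwarz. This cannot close: Cauchy--Schwarz produces $\bigl(\int_\Omega u^2|\nabla(U_\alpha\mu)|^2\,dx\bigr)^{1/2}$, which involves the \emph{gradient} of the potential and is in no way controlled by its sup-norm. The inequality $\me_\alpha(U_\alpha\mu,u^2)\le\|U_\alpha\mu\|_\infty\,\me_\alpha(u,u)$ is true, but the mechanism in \cite{SV} is operator-theoretic rather than a pointwise manipulation: one considers $J:(\mf,\me_\alpha)\to L^2(\po,\mu)$, $u\mapsto\widetilde u$, computes $J^*f=U_\alpha(f\mu)$, observes that $JJ^*$ is a symmetric positivity-preserving operator on $L^2(\po,\mu)$ with $JJ^*1=\widetilde{U_\alpha\mu}\le\|U_\alpha\mu\|_\infty$, and concludes $\|J\|^2=\|JJ^*\|_{L^2(\mu)}\le\|JJ^*\|_{L^\infty(\mu)\to L^\infty(\mu)}\le\|U_\alpha\mu\|_\infty$ by self-adjointness and interpolation. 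This argument also disposes of the r.q.e.\ and unboundedness issues you flag at the end (after reducing to measures in $S_{K_0}(\po)$ and passing to the limit), so you should replace the displayed Leibniz-plus-Cauchy--Schwarz computation by this duality argument; the remainder of your plan then goes through.
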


\section{Signed measures case}

Let $\mu=\mu^+-\mu^-$ be a signed Broel measure on $\po$. If $|\mu|$ is a smooth measure, then $\mu$ is called a signed smooth measure, and we shall write $\mu\in S(\po)-S(\po)$. It is evident that $\mu\in S(\po)-S(\po)$ if $\mu^+$ and $\mu^-$ are both in $S(\po)$. For $\mu\in S(\po)-S(\po)$ we shal set $A_t^{\mu}:=A_t^{\mu^+}-A_t^{\mu^-}$. We shal call $\mu$ the Revuz measure of $A_t^{\mu}$.

We define for $\mu\in S(\po)-S(\po)$ 
\[
 \me_{\mu}(u,v)=\int_{\Omega}\nabla u\nabla v dx+\int_{\po}uvd\mu,\text{  }\forall u,v\in \mf^{\mu}
\]
where $\mfm=\Huntild\cap L^2(\po,|\mu|)$

First, we begin with the following result

\begin{theorem}\label{thm:infty}
 Let $\mu$ be a signed Borel measure on $\po$ and assume that $|\mu|$ is locally infnite everywhere on $\po$; i.e.,
\[
 \forall x\in\po\text{ and }r>0\quad |\mu|(B(x,r))=\infty.
\]

Then the form $\mem$, which we denote by $\me_{\infty}$, is closed and is given by 
\[
 \me_{\infty}(u,v)=\int_{\Omega}\nabla u\nabla v dx\quad, D(\me_{\infty})=H^1_0(\Omega)
\]

\end{theorem}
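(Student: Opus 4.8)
The plan is to identify the form domain $\mfm = \Huntild \cap L^2(\po,|\mu|)$ with $H^1_0(\Omega)$, since on $H^1_0(\Omega)$ the boundary integral $\int_{\po} uv\,d\mu$ vanishes (the r.q.c.\ representative is zero r.q.e.\ on $\po$), so the form reduces to $\int_\Omega \nabla u\nabla v\,dx$, which is the Dirichlet-Laplacian form and is well known to be closed. Thus the real content is the set equality $\Huntild \cap L^2(\po,|\mu|) = H^1_0(\Omega)$. One inclusion is easy: if $u\in H^1_0(\Omega)$, then $\widetilde u = 0$ r.q.e.\ on $\po$, hence $\int_{\po}|u|^2\,d|\mu| = 0$ because $|\mu|$, being smooth, charges no relatively polar set; so $u\in L^2(\po,|\mu|)$ and, using $H^1_0(\Omega)\subset\Huntild$, we get $u\in\mfm$.

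The substantial inclusion is $\mfm \subset H^1_0(\Omega)$. First I would recall the characterization (from \cite{AW1,Wa}) that $H^1_0(\Omega) = \{u\in\Huntild : \widetilde u = 0 \text{ r.q.e.\ on }\po\}$. So it suffices to show: if $u\in\Huntild$ and $\int_{\po}|u|^2\,d|\mu| < \infty$ with $|\mu|$ locally infinite everywhere on $\po$, then $\widetilde u = 0$ r.q.e.\ on $\po$. The key point is a local one: fix $x\in\po$ and $r>0$. On the relatively open set $B(x,r)\cap\po$ the finiteness $\int_{B(x,r)\cap\po}|\widetilde u|^2\,d|\mu| < \infty$ together with $|\mu|(B(x,r)\cap\po)=\infty$ forces $\widetilde u$ to vanish $|\mu|$-a.e.\ on a dense portion of $B(x,r)\cap\po$ — more precisely, the set $\{|\widetilde u|\ge 1/k\}\cap B(x,r)\cap\po$ has finite $|\mu|$-measure for each $k$, so $\{\widetilde u \ne 0\}\cap B(x,r)\cap\po$ is $\sigma$-finite for $|\mu|$, while its complement in $B(x,r)\cap\po$ has infinite measure and hence in particular is nonempty; iterating over a countable basis of relatively open sets, $\{\widetilde u = 0\}\cap\po$ is dense in $\po$.

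To upgrade ``$\widetilde u = 0$ on a dense subset of $\po$, quasi-everywhere where it is defined'' to ``$\widetilde u = 0$ r.q.e.\ on $\po$'' I would use quasi-continuity: $\widetilde u$ restricted to $\po$ minus a relatively open set of arbitrarily small capacity is genuinely continuous, and a continuous function vanishing on a dense set vanishes identically; a standard exhaustion/limiting argument over a sequence of such sets then gives $\widetilde u = 0$ r.q.e.\ on $\po$. Combined with $u\in\Huntild$ this yields $u\in H^1_0(\Omega)$. Finally, having established $\mfm = H^1_0(\Omega)$ and that $\me_\mu$ agrees with $\me_\infty(u,v)=\int_\Omega\nabla u\nabla v\,dx$ on this domain, closedness is immediate since $(\me_\infty, H^1_0(\Omega))$ is the classical Dirichlet form.

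The main obstacle is the passage from $|\mu|$-a.e.\ vanishing (on a dense relatively open portion) to r.q.e.\ vanishing on all of $\po$: one must carefully exploit that $|\mu|$ charges no relatively polar set in the right direction, namely that a relatively quasi-continuous function which is $|\mu|$-a.e.\ zero on a relatively open set $G$ is in fact zero r.q.e.\ on $G$ — this requires that $|\mu|$ have full relative quasi-support on $G$, which is exactly guaranteed by local infiniteness. Making this support argument precise (in effect, that $G$ is the relative quasi-support of $\mathbf 1_G\cdot|\mu|$) is the technical heart of the proof; everything else is routine given the results already recalled in Section 2.
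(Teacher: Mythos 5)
Your overall strategy coincides with the paper's: reduce everything to the implication ``$u\in\Huntild$ and $\int_{\po}|\widetilde u|^2\,d|\mu|<\infty$ imply $\widetilde u=0$ r.q.e.\ on $\po$'', and then invoke the characterization $H^1_0(\Omega)=\{u\in\Huntild:\widetilde u=0\text{ r.q.e.\ on }\po\}$ from \cite{AW1} together with the easy converse inclusion. The paper disposes of that implication in one line; you try to actually prove it, and it is precisely there that your argument breaks in two places. First, from the fact that each set $\{|\widetilde u|\ge 1/k\}\cap B(x,r)\cap\po$ has finite $|\mu|$-measure you conclude that $\{\widetilde u\neq 0\}\cap B(x,r)\cap\po$ is $\sigma$-finite and hence that its complement ``has infinite measure and in particular is nonempty''. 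This is a non sequitur: a $\sigma$-finite set can have infinite measure, so $|\mu|(B(x,r)\cap\po)=\infty$ is perfectly compatible with $\widetilde u\neq 0$ everywhere on $B(x,r)\cap\po$. Your density claim is therefore not established. Second, even granting that $\{\widetilde u=0\}\cap\po$ is dense in $\po$, the upgrade ``$\widetilde u$ continuous on $\po\setminus G$ with $\Cap(G)<\epsilon$ and zero on a dense set, hence zero on $\po\setminus G$'' fails, because nothing prevents the dense zero-set from lying entirely inside the exceptional set $G$; density in $\po$ does not give density in $\po\setminus G$.

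The workable route is the one you gesture at in your final paragraph but do not carry out: work directly with the relatively quasi-open superlevel sets $A_k=\{x\in\po:|\widetilde u(x)|>1/k\}$, which satisfy $|\mu|(A_k)\le k^2\int_{\po}|\widetilde u|^2\,d|\mu|<\infty$, and show that a relatively quasi-open subset of $\po$ of finite $|\mu|$-measure must be relatively polar. Writing $A_k\setminus G=V\cap(\po\setminus G)$ with $V$ relatively open (by quasi-continuity), either $V\cap\po=\emptyset$ for every admissible $G$, which yields $\Cap(A_k)=0$, or $V$ contains some $B(x,r)\cap\ombar$ with $x\in\po$, and one must then exclude the possibility that the infinite mass of $B(x,r)\cap\po$ is carried by $G$. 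That exclusion is exactly the quasi-support statement you yourself flag as ``the technical heart''; you assert that local infiniteness guarantees it but give no argument, so the gap remains. To be fair, the paper's own proof also passes over this step in silence, but your written deductions, as they stand, are invalid rather than merely terse.
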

\begin{proof}
 Let $u\in\mfm$ and $\widetilde{u}$ its relatively continous version, it follow from the fact that $\int_{\po}|\widetilde{u}|^2d|\mu|<\infty$ that $\widetilde{u}=0$ r.q.e on $\po$. Thus
\[
 \mf^{\infty}:=\mfm=\{u\in\Huntild:\widetilde{u}=0\text{ r.q.e. on }\po\}
\]
One obtain that for all $u,v\in\mf^{\infty}$,
\[
 \me_{\infty}(u,v):=\mem(u,v)=\int_{\Omega}\nabla u\nabla v dx.
\]

Following a characterization of $H^1_0(\Omega)$ with relative capacity \cite[Theorem 2.3.]{AW1}, one conclude that $\mf^{\infty}=H^1_0(\Omega)$.
\end{proof}

\begin{proposition}
 Let $\mu\in S(\po)-S_K(\po)$. Then
\begin{enumerate}
 \item $(\me^{\mu},\mf^{\mu})$ is lower semibounded closed quadratic form,
  \item $(\pmu)_{t\geq 0}$ is the unique strongly continous semigroup corresponding 

to $(\me^{\mu},\mf^{\mu})$,
   \item $\mfm=\mf\cap L^2(\po,\mu^+)$.
\end{enumerate}
\end{proposition}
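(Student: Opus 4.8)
The statement to prove concerns $\mu = \mu^+ - \mu^- \in S(\po) - S_K(\po)$, i.e.\ the negative part $\mu^-$ lies in the Kato class $S_K(\po)$ while the positive part $\mu^+$ is merely smooth (possibly locally infinite). The strategy is to treat $\me^\mu$ as a perturbation of the form $\me_{\mu^+}$ (the ``positive smooth measure'' Robin Laplacian, already understood from Section~2) by the Kato-class measure $-\mu^-$, and to invoke the KLMN-type bound available for Kato measures. I would carry out the three assertions in the order \emph{(3)}, \emph{(1)}, \emph{(2)}, since (3) identifies the form domain and feeds directly into the closedness argument.

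**Step (3): the form domain.** First I would show $\mfm = \mf \cap L^2(\po,\mu^+)$. By definition $\mfm = \Huntild \cap L^2(\po,|\mu|)$, so the inclusion $\subseteq$ is trivial. For the reverse, take $u \in \mf \cap L^2(\po,\mu^+)$ with relatively quasi-continuous representative $\widetilde u$; I must check $\int_{\po}|\widetilde u|^2\,d\mu^- < \infty$. This is exactly where Theorem~\ref{thm:ebound} (with $\mu^- \in S_K(\po) \subseteq \hat S_K(\po)$) enters: it gives $\int_{\po}|\widetilde u|^2\,d\mu^- \le c(\mu^-)\,\me(u,u) + \|u\|_2^2 < \infty$ for every $u \in \mf$. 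Hence $L^2(\po,\mu^-) \supseteq \mf$ and the intersection collapses to $\mf \cap L^2(\po,\mu^+)$.

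**Steps (1) and (2): closedness, semiboundedness, and the semigroup.** Write $\me^\mu(u,u) = \me_{\mu^+}(u,u) - \int_{\po}|\widetilde u|^2\,d\mu^-$ on the domain $\mfm = \mfmp$ (the equality of domains being precisely part (3), using $L^2(\po,|\mu|) = L^2(\po,\mu^+)$ on $\mf$). The form $(\me_{\mu^+},\mfmp)$ is a nonnegative closed form — this is the positive-measure case from subsection~2.4, valid even for $\mu^+$ nowhere Radon. The measure $\mu^-$ is $\me$-bounded with bound $c(\mu^-) < 1$ by Theorem~\ref{thm:ebound}, and since $\me(u,u) \le \me_{\mu^+}(u,u)$ this gives an $\me_{\mu^+}$-bound with relative bound $< 1$ plus an $L^2$ term. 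The KLMN theorem then yields that $\me^\mu = \me_{\mu^+} - \int_{\po}|\cdot|^2 d\mu^-$ is closed, lower semibounded (with lower bound $-\|u\|_2^2/(1-c(\mu^-))$ roughly), and on the same domain $\mfmp = \mfm$; this is (1). For (2), the strongly continuous semigroup associated with a lower-semibounded closed form is unique by general form theory, so it only remains to identify it with $(\pmu)_{t\ge 0} = (E_\cdot[f(X_t)e^{-A_t^\mu}])_{t\ge 0}$. Here $e^{-A_t^\mu} = e^{-A_t^{\mu^+}}e^{A_t^{\mu^-}}$, and the Feynman–Kac–type verification proceeds as in Theorem~2.8 (the positive case): one checks the resolvent identity $\int_0^\infty e^{-\alpha t}\pmu f\,dt = G_\alpha^{\mu^+}f$ perturbed by the functional $A^{\mu^-}$, using that $\|E_\cdot e^{A_t^{\mu^-}}\|_{rq} \le c\,e^{\beta t}$ (Proposition~\ref{pro:beta}) to control the Neumann–Dunford–Schwartz series and justify that $\pmu$ is a bounded, strongly continuous family on $L^2(\Omega)$ whose generator's form is $\me^\mu$.

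**Main obstacle.** The routine parts are the domain identification and the abstract KLMN application; the delicate point is the probabilistic identification in (2): one must show the Feynman–Kac semigroup built from the \emph{signed} additive functional $A^\mu$ is well-defined on $L^2(\Omega)$ (not just bounded $L^\infty$) and coincides with the form semigroup, despite $\mu^+$ being possibly locally infinite so that $e^{-A_t^{\mu^+}}$ degenerates like a killing term. I expect the argument to go through by first approximating $\mu^+$ by measures in $S_{K_0}(\po)$ (the approximation theorem of Albeverio–Ma cited after the definition of $S_{K_0}(\po)$), establishing the identity for each approximant via the Kato-bound on $\mu^-$, and passing to the limit using monotone convergence of $A^{\mu^+_n} \uparrow A^{\mu^+}$ together with the uniform exponential control on $e^{A_t^{\mu^-}}$; the interchange of this limit with the $L^2$-semigroup convergence (monotone convergence of the forms $\me_{\mu^+_n} \uparrow \me_{\mu^+}$ in the sense of closed forms) is the step requiring the most care.
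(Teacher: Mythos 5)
Your proposal is correct and takes essentially the same route as the paper, whose proof simply defers to Proposition 3.1 of Albeverio--Ma \cite{AM2}: identify the domain via the $\me$-boundedness of the Kato-class part $\mu^-$ (Theorem 3.8), obtain closedness and lower semiboundedness of $\me_{\mu^+}-\int_{\po}|\cdot|^2d\mu^-$ by a KLMN argument with relative bound $c(\mu^-)<1$, and identify the Feynman--Kac semigroup using Khasminskii's lemma together with approximation of $\mu^+$ by measures in $S_{K_0}(\po)$. Your reconstruction fills in the details the paper omits and contains no gaps beyond those already implicit in the cited reference.
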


\begin{proof}
 The proof is the same as Proposition 3.1. in \cite{AM2} with minor change.
\end{proof}

\begin{theorem}
  Let $\mu\in S(\po)-S(\po)$. Then the following assertions are equivalent to each other:
\begin{enumerate}
 \item $(\me^{\mu},\mf^{\mu})$ is lower semibounded,
 \item $(\pmu)_{t\geq 0}$ is a strongly continous semigroup on $L^2(\Omega)$,
  \item there exist constants $c$ and $\beta$ such that 
\[
 \|\pmu f\|_{2}\leq ce^{\beta t}\|f\|_2\quad,\forall f\in L^2(\Omega)
\]
  \item the form $Q_{\mu^-}$ is relatively form bound with respect to  $(\me^{\mu^+},\mf^{\mu^+})$ with bound less than $1$.
\end{enumerate}

If one of the above assertion holds, the closed form coresponding to  $(\pmu)_{t\geq 0}$ is the largest closable form that is smaller that $(\me^{\mu},\mf^{\mu})$.
\end{theorem}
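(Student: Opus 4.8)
The plan is to prove the circle of equivalences by a standard KLMN/perturbation-of-Dirichlet-forms argument, treating $(\me^{\mu^+},\mf^{\mu^+})$ as the unperturbed (closed, lower semibounded, even a Dirichlet form by the positive smooth measure case of Section~2.4) object and $Q_{\mu^-}(u,v):=\int_{\po}uv\,d\mu^-$ as the perturbation. First I would observe the implications $(1)\Leftrightarrow(3)$: a symmetric form is lower semibounded and closable iff its associated (putative) semigroup satisfies a bound $\|\pmu f\|_2\le ce^{\beta t}\|f\|_2$; more precisely, if $(\me^{\mu},\mf^{\mu})$ is lower semibounded with closure $\mathcal A$, then $e^{-t\mathcal A}$ has such a bound, and conversely the quadratic form of a strongly continuous semigroup with this growth is lower semibounded. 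The equivalence $(2)\Leftrightarrow(3)$ is essentially the Hille–Yosida/Feller–Miyadera–Phillips characterization together with the fact established for the positive part that $\pmu f=E_x[f(X_t)e^{-A_t^{\mu}}]=E_x[f(X_t)e^{-A_t^{\mu^+}}e^{A_t^{\mu^-}}]$, so $(\pmu)$ is well-defined as a (not necessarily contractive) family on $L^2$ once the exponential moments of $A_t^{\mu^-}$ are under control, and semigroup/strong continuity follows from the additive-functional identity exactly as in Theorem~2.5 / Theorem~6.1.1 of \cite{FOT}.

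The heart of the matter is $(4)\Leftrightarrow(1)$. In one direction, if $Q_{\mu^-}$ is $(\me^{\mu^+},\mf^{\mu^+})$-form bounded with bound $a<1$, i.e. $\int_{\po}|u|^2d\mu^-\le a\,\me^{\mu^+}(u,u)+b\|u\|_2^2$ for all $u\in\mf^{\mu^+}$, then the KLMN theorem applies directly: $\me^{\mu}=\me^{\mu^+}-Q_{\mu^-}$ on $\mf^{\mu}=\mf^{\mu^+}\cap L^2(\po,\mu^-)=\mf^{\mu^+}$ (the domain inclusion follows from the bound) is closed and lower semibounded with $\me^{\mu}(u,u)\ge (1-a)\me^{\mu^+}(u,u)-b\|u\|_2^2$. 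Conversely, if $(\me^{\mu},\mf^{\mu})$ is lower semibounded, I would argue by contradiction: if the relative bound were $\ge 1$, one could produce a sequence $u_n\in\mf^{\mu}$ with $\me^{\mu^+}(u_n,u_n)+\|u_n\|_2^2$ bounded but $\int_{\po}|u_n|^2d\mu^-\to\infty$, forcing $\me^{\mu}(u_n,u_n)\to-\infty$ while the lower-order term stays bounded, contradicting lower semiboundedness. Making this rigorous is the main obstacle, since $\mu^-$ need not be in $S_K(\po)$ and $L^2(\po,\mu^-)$ may be a genuinely smaller space than $\mf^{\mu^+}$; the correct statement of (4) must be read with the convention that the form bound is computed on $\mf^{\mu^+}\cap L^2(\po,\mu^-)$, and one needs the regularity of $(\me^{\mu^+},\mf^{\mu^+})$ (it is a Dirichlet form) to know that this intersection is $\me^{\mu^+}_1$-dense enough for the Kato–Lions–Lax–Milgram–Nelson machinery. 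This is precisely the content of the analogue Theorem~3.3 in \cite{SV}, adapted from general $(X,m)$ to $X=\ombar$ with measures concentrated on $\po$, so I would invoke that paper's argument with the routine modifications.

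Finally, for the concluding assertion that the closed form associated with $(\pmu)_{t\ge 0}$ is the largest closable form smaller than $(\me^{\mu},\mf^{\mu})$: I would use the general fact (see \cite{SV} and the references on perturbation of Dirichlet forms \cite{AM1,AM2}) that for a monotone-type perturbation the semigroup $\pmu$ is obtained as the limit of the semigroups $\mathcal P_t^{\mu^+-\mu^-_n}$, where $\mu^-_n=\mu^-\wedge n$ (or the $S_{K_0}$-approximants of $\mu^-$ from Section~3.1), each of which corresponds to a genuinely closed lower semibounded form $\me^{\mu^+}-Q_{\mu^-_n}$; the generators converge in the strong resolvent sense, the limiting form is closable precisely because of the uniform bound in (3), and a standard argument identifies its closure as the maximal closable minorant of $(\me^{\mu},\mf^{\mu})$ — the point being that any closable form $q\le\me^{\mu}$ satisfies $q\le\me^{\mu^+}-Q_{\mu^-_n}$ for all $n$, hence $q$ is dominated by the limit. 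The expected difficulties here are purely bookkeeping: checking that the approximants $\me^{\mu^+}-Q_{\mu^-_n}$ really increase to something whose closure does not overshoot $\me^{\mu}$, and that strong resolvent convergence transfers lower semiboundedness with uniform constants, both of which follow the pattern of \cite{SV}.
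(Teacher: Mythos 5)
The paper itself offers no argument for this theorem beyond the one-line citation of Theorem 4.1 in \cite{AM2}, so there is no written proof to match yours against; your sketch does follow the same Albeverio--Ma/Stollmann--Voigt circle of ideas. As a proof, however, it has two genuine gaps. First, your treatment of $(1)\Leftrightarrow(3)$ begins ``if $(\mem,\mfm)$ is lower semibounded with closure $\mathcal A$'' --- but the entire point of the final sentence of the theorem is that $(\mem,\mfm)$ need \emph{not} be closable, so there may be no closure and no semigroup ``associated with the form.'' The correct route (the one in \cite{AM1,AM2}) defines $\pmu f=E_x[f(X_t)e^{-A_t^{\mu}}]$ probabilistically, independently of any closability, proves $(2)\Leftrightarrow(3)$ for that family directly, and only afterwards connects it to the form via Simon's theory of the regular part (largest closable minorant) of a non-closable lower semibounded form. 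Your argument as written is circular at exactly the place where closability can fail.

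Second, the contradiction argument for $(1)\Rightarrow(4)$ does not close. Negating ``relative bound $<1$'' does not produce a sequence $u_n$ with $\me^{\mu^+}(u_n,u_n)+\|u_n\|_2^2$ bounded and $\int_{\po}|u_n|^2d\mu^-\to\infty$: relative bound exactly $1$ is compatible with $\int_{\po}|u|^2d\mu^-\leq\me^{\mu^+}(u,u)+b\|u\|_2^2$ for some finite $b$, in which case $\mem(u,u)\geq -b\|u\|_2^2$ and lower semiboundedness is not contradicted. Lower semiboundedness by itself only forces relative bound $\leq 1$ (which is how the condition reads in the source \cite{AM1}); if the strict inequality in (4) is really intended, your sketch supplies no argument for it. Relatedly, for the closing assertion you assert rather than prove both that $\pmu=\lim_n\mathcal P_t^{\mu_n}$ with $\mu_n=\mu^+-\mu^-\wedge n$ and that the decreasing limit of the closed forms $\memp-Q_{\mu^-\wedge n}$ identifies the largest closable form below $\mem$; these require Simon's monotone convergence theorem for decreasing forms together with a monotone-convergence argument in the Feynman--Kac representation, not merely the observation that any closable $q\leq\mem$ satisfies $q\leq\memp-Q_{\mu^-\wedge n}$ for every $n$.
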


\begin{proof}
 The proof is tha same as Theorem 4.1. in \cite{AM2} with minor change.
\end{proof}

Let $\mu\in S(\po)-S_K(\po)$. We will denote by $\Delta_{\mu}$ the selfadjoint operator in $L^2(\Omega)$ associated with $(\me^{\mu},\mf^{\mu})$; i.e.,

\[
  \left\{
          \begin{array}{ll}
            D(\dmu):=\{u\in \mfm:\exists v\in L^2(\Omega): \mem(u,\varphi)=(v,\varphi)\forall\varphi\in\mfm\}\\
            \dmu u:=-v\\
          \end{array}
  \right.
\]

Since for each $u\in D(\dmu)$ we have 
\begin{equation}\label{eq:lapl}
 \int_{\Omega}\nabla u\nabla\varphi dx+\int_{\po}u\varphi d\mu=\int_{\Omega}v\varphi dx
\end{equation}
for all $\varphi\in\mem$, if we choose $\varphi\in\mathcal D(\Omega)$, the equality \eqref{eq:lapl} can be written
\[
 \langle -\Delta u,\varphi\rangle=\langle v,\varphi\rangle
\]
where $\langle,\rangle$ denotes the duality between $\mathcal D(\Omega)'$ and $\mathcal D(\Omega)$. Since $\varphi\in\mathcal D(\Omega)$ is arbitrary, it follows that 
\[
 -\Delta u=v\quad\text{ in }\mathcal D(\Omega)'
\]
 Thus $\dmu$ is a realization of the Laplacian on $L^2(\Omega)$.

\begin{proposition}
 Let  $\mu\in S(\po)-S_K(\po)$. Then the following assertion are equivalent to each other:
\begin{enumerate}
 \item $(\me^{\mu},\mf^{\mu})$ is regular on $\ombar$.
  \item $|\mu|$ is a Radon measure.
\end{enumerate}
\end{proposition}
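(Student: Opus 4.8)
The plan is to settle $(1)\Rightarrow(2)$ directly, and to reduce $(2)\Rightarrow(1)$ to the positive-measure situation of \cite{A}.

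\smallskip\noindent\emph{$(1)\Rightarrow(2)$.} Suppose $(\me^{\mu},\mf^{\mu})$ is regular; then $\mf^{\mu}\cap C_c(\ombar)$ is dense in $C_c(\ombar)$ for the uniform norm. Fix $x_0\in\po$ and $r>0$. By Urysohn's lemma choose $g\in C_c(\ombar)$ with $0\le g\le 1$ and $g\equiv1$ on $\overline{B(x_0,r)}\cap\ombar$, and then $\varphi\in\mf^{\mu}\cap C_c(\ombar)$ with $\|\varphi-g\|_{\infty}<\tfrac12$, so that $\varphi\ge\tfrac12$ on $B(x_0,r)\cap\ombar$. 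Since $\varphi\in\mf^{\mu}\subset L^2(\po,|\mu|)$,
\[
 \tfrac14\,|\mu|\bigl(B(x_0,r)\cap\po\bigr)\le\int_{\po}|\varphi|^2\,d|\mu|<\infty .
\]
As $x_0\in\po$ and $r>0$ were arbitrary, $|\mu|$ is locally finite, hence a Radon measure.

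\smallskip\noindent\emph{$(2)\Rightarrow(1)$: reduction to $\mu^+$.} Assume $|\mu|$ is Radon. Since $\mu^-\in S_K(\po)\subset\hat S_K(\po)$ with $c(\mu^-)=0$, Theorem~\ref{thm:ebound} gives $\int_{\po}|u|^2\,d\mu^-\le\|u\|_2^2$ for all $u\in\mf$; in particular $\mf\subset L^2(\po,\mu^-)$, so $\mf^{\mu}=\mf\cap L^2(\po,\mu^+)=\mf^{\mu^+}$, and after adding a suitable multiple of $\|\cdot\|_2^2$ the form norm of $(\me^{\mu},\mf^{\mu})$ is equivalent on $\mf^{\mu}=\mf^{\mu^+}$ to $\bigl(\me_1(u,u)+\int_{\po}|u|^2\,d\mu^+\bigr)^{1/2}$. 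Hence $(\me^{\mu},\mf^{\mu})$ is regular on $\ombar$ if and only if $(\me^{\mu^+},\mf^{\mu^+})$ is, and it suffices to treat the positive Radon measure $\mu^+$, which is the situation handled in \cite{A}; for completeness we recall the argument. The uniform-density half is immediate: if $\varphi\in\mf\cap C_c(\ombar)$ then $\int_{\po}|\varphi|^2\,d\mu^+\le\|\varphi\|_{\infty}^2\,\mu^+(\mathrm{supp}\,\varphi\cap\po)<\infty$ because $\mu^+$ is locally finite, so $\mf\cap C_c(\ombar)\subset\mf^{\mu^+}\cap C_c(\ombar)$, and $\mf\cap C_c(\ombar)$ is already uniformly dense in $C_c(\ombar)$ by regularity of $(\me,\mf)$.

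\smallskip\noindent\emph{$(2)\Rightarrow(1)$: density in the form norm.} It remains to show $\mf\cap C_c(\ombar)$ is dense in $\mf^{\mu^+}$ for the norm above. Given $u\in\mf^{\mu^+}$, I would approximate in three stages. First truncate: $(-N)\vee u\wedge N\to u$ in $\me_1$ (normal contractions act continuously on $(\mf,\me_1)$) and in $L^2(\po,\mu^+)$ (dominated convergence, since $\widetilde u\in L^2(\po,\mu^+)$), so assume $|u|\le N$. Next cut off in space: with $\chi_R(x)=\chi(x/R)$ for a fixed $\chi\in C_c^{\infty}(\mathbb R^d)$, $0\le\chi\le1$, $\chi\equiv1$ near $0$, one has $\chi_R|_{\ombar}\in H^1(\Omega)\cap C_c(\ombar)\subset\mf$ and $u\chi_R\to u$ in $\me_1$ (using $\|\nabla\chi_R\|_{\infty}\le C$ and $\int_{|x|>R}|u|^2\to0$) and in $L^2(\po,\mu^+)$, so assume moreover that $\widetilde u$ vanishes off a fixed compact $K\subset\ombar$ with $\mu^+(K\cap\po)<\infty$. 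Finally, pick $w_n\in\mf\cap C_c(\ombar)$ with $w_n\to u$ in $\me_1$ (regularity of $(\me,\mf)$), fix $\chi'\in C_c^{\infty}(\mathbb R^d)$ with $\chi'\equiv1$ on a neighbourhood of $K$ and $0\le\chi'\le1$, and set $v_n:=\bigl((-N)\vee w_n\wedge N\bigr)\chi'|_{\ombar}\in\mf\cap C_c(\ombar)$. Then $v_n\to u$ in $\me_1$ (continuity on uniformly bounded sequences of truncation at level $N$ and of multiplication by the fixed function $\chi'$), $|v_n|\le N$, and each $v_n$ is supported in $\mathrm{supp}\,\chi'$. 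Passing to a subsequence, $v_n(x)\to\widetilde u(x)$ for relatively quasi-every $x$, hence $\mu^+$-a.e.\ on $\po$ (as $\mu^+\in S(\po)$ charges no relatively polar set); since $|v_n-\widetilde u|\le2N$ on the set $\mathrm{supp}\,\chi'\cap\po$ of finite $\mu^+$-measure, dominated convergence gives $\int_{\po}|v_n-\widetilde u|^2\,d\mu^+\to0$. A diagonal argument over the three stages produces the desired sequence in $\mf\cap C_c(\ombar)$ converging to $u$ in the form norm.

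\smallskip The step I expect to be the main obstacle is this last one: upgrading $\me_1$-approximation to approximation in the full form norm, i.e.\ also in $L^2(\po,\mu^+)$. This is exactly why the two preliminary localizations are forced — bounded truncation and then a spatial cut-off exploiting the \emph{local} finiteness of the Radon measure $\mu^+$ — so that one only ever integrates uniformly bounded functions against $\mu^+$ over sets of finite $\mu^+$-mass; and it rests on two standard facts about the regular Dirichlet form $(\me,\mf)$, namely that normal contractions and multiplication by a fixed function of $H^1(\Omega)\cap C_c(\ombar)$ act continuously on uniformly bounded sequences in $(\mf,\me_1)$, and that an $\me_1$-convergent sequence of quasi-continuous functions has a subsequence converging relatively quasi-everywhere. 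When $\mu^+(\po)<\infty$ the spatial cut-off is unnecessary and the argument simplifies accordingly.
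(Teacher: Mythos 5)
Your proof is correct, and it supplies in full the argument that the paper itself omits (the paper only points to Theorem 5.8 of \cite{AM1}): necessity via Urysohn plus uniform density of $\mf^{\mu}\cap C_c(\ombar)$, and sufficiency by reducing to $\mu^+$ (using that $\mu^-\in S_K(\po)$ is infinitesimally form bounded, so $\mf^{\mu}=\mf^{\mu^+}$ with equivalent form norms) and then the standard truncation/cut-off/relatively-quasi-everywhere-convergence scheme. This is essentially the same route as the cited reference, transplanted to the relative capacity setting; the only cosmetic caveat is that Theorem \ref{thm:ebound} should really read $\int_{\po}|u|^2\,d\mu\leq \varepsilon\,\me(u,u)+C_{\varepsilon}\|u\|_2^2$ for every $\varepsilon>c(\mu)$, but this does not affect your reduction.
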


\begin{proposition}
  Let  $\mu\in S(\po)-S_K(\po)$. Then there exists a relatively open set $X_0$ satisfying $\Omega\subset X_0\subset\ombar$ such that the form $(\me^{\mu},\mf^{\mu})$ is regular on $X_0$
\end{proposition}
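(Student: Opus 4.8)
The plan is to excise from $\ombar$ exactly the part of the boundary on which $|\mu|$ is not Radon and to show that the perturbed form becomes regular on what is left. Write $\mu=\mu^+-\mu^-$ with $\mu^-\in S_K(\po)$, and set
\[
 Y:=\{x\in\po:\ |\mu|(B(x,r))=\infty\ \text{ for all }r>0\}.
\]
First I would check that $Y$ is closed in $\ombar$: if $x\notin Y$ there is $r>0$ with $|\mu|(B(x,r))<\infty$, and then $B(y,r/2)\subset B(x,r)$ for all $y$ near $x$, so a whole neighbourhood of $x$ avoids $Y$. Put $X_0:=\ombar\setminus Y$. Since $Y\subset\po$, $X_0$ is relatively open with $\Omega\subset X_0\subset\ombar$; it is a locally compact separable metric space, $\Omega$ is dense in it, and the measure $m$ (carried by $\Omega$) restricts to a Radon measure on $X_0$ with full support and $L^2(X_0,m|_{X_0})=L^2(\Omega)$. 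By the preceding results $(\me^\mu,\mf^\mu)$ is a lower semibounded closed form on $L^2(X_0,m|_{X_0})$, so asking for its regularity on $X_0$ makes sense. Finally, on $X_0\cap\po=\po\setminus Y$ every point has a neighbourhood of finite $|\mu|$-measure, so $|\mu|$ restricted to $X_0$ is a Radon measure; this is the feature that should make the previous proposition ("regular on $\ombar$ $\Leftrightarrow$ $|\mu|$ Radon") applicable after the excision.

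To establish regularity on $X_0$ I must exhibit a subset of $\mf^\mu$ made of functions with compact support in $X_0$ that is dense in $C_c(X_0)$ for the uniform norm and dense in $\mf^\mu$ for the norm $\me^\mu_\gamma$, with $\gamma$ above the lower bound of $\me^\mu$. My candidate is $\mf\cap C_c(X_0)$. For the uniform density: if $g\in C_c(X_0)$ then $\mathrm{supp}\,g$ is compact in $X_0$, hence disjoint from $Y$, so $\int_{\po}|g|^2\,d|\mu|\le\|g\|_\infty^2\,|\mu|(\mathrm{supp}\,g\cap\po)<\infty$ and therefore $\mf\cap C_c(X_0)=\mf^\mu\cap C_c(X_0)$. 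Pick a relatively open $V$ with $\mathrm{supp}\,g\subset V$ and compact closure inside $X_0$, and $\chi\in C_c^\infty(\mathbb R^d)$ equal to $1$ near $\mathrm{supp}\,g$ with $\mathrm{supp}\,\chi\cap\ombar\subset V$; since $(\me,\mf)$ is regular on $\ombar$, approximate $g$ (extended by $0$) uniformly by $h_n\in\mf\cap C_c(\ombar)$, and observe $\chi h_n\in\mf\cap C_c(X_0)$ with $\chi h_n\to g$ uniformly.

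The substantive point is the form-norm density, and it rests on the following localisation of the mechanism behind Theorem~\ref{thm:infty}: \emph{every $u\in\mf^\mu$ satisfies $\widetilde u=0$ r.q.e.\ on $Y$}. Indeed, if this failed there would be $\epsilon>0$ with $\Cap(\{x\in Y:|\widetilde u(x)|\ge\epsilon\})>0$; since $|\mu|$ is nowhere Radon on $Y$ and charges no relatively polar set, such a set would have infinite $|\mu|$-measure, contradicting $\int_{\po}|\widetilde u|^2\,d|\mu|<\infty$. Granting this, $\mf^\mu\subset\{u\in\Huntild:\widetilde u=0\text{ r.q.e.\ on }Y\}$, and one concludes as in the description of the part of a regular Dirichlet form on a relatively open set (cf.\ the proof of Theorem~\ref{thm:infty} and \cite{AW1}, \cite[Ch.~4]{FOT}): given $u\in\mf^\mu$, first truncate it to bounded $u_n$, which converge to $u$ in $\me_1$-norm and, by dominated convergence (the quasi-continuous representatives converge $|\mu|$-a.e.\ and $|u|\in L^2(\po,|\mu|)$), in $L^2(\po,|\mu|)$; then multiply by cut-offs taken from $\mf\cap C_c(\ombar)$ that vanish near $Y$ and increase to $1$ on $X_0$, controlling the $\me$-part exactly as in the $H^1_0$-case, the $\mu^+$-part by dominated convergence along a subsequence, and the $\mu^-$-part by the $\me$-boundedness of $\mu^-$ (Theorem~\ref{thm:ebound}). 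This produces approximants in $\mf\cap C_c(X_0)$ converging to $u$ in $\me^\mu_\gamma$-norm, and hence the regularity of $(\me^\mu,\mf^\mu)$ on $X_0$.

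The main obstacle is precisely the displayed claim: ruling out that a quasi-continuous $\mf$-function can be $|\mu|$-square-integrable yet non-zero on a positive-capacity portion of $Y$. This is believable because smoothness of $|\mu|$ prevents it from being concentrated on thin sets, but a clean proof needs a quantitative comparison between the relative capacity and $|\mu|$ near $Y$, in the spirit of (and generalising) the short argument used for Theorem~\ref{thm:infty}; compare also the treatment in \cite{AM2}. Once it is available, it is worth recording that $Y$ is maximal for this property: combined with the previous proposition it shows that $(\me^\mu,\mf^\mu)$ is regular on all of $\ombar$ if and only if $Y=\emptyset$, i.e.\ if and only if $|\mu|$ is a Radon measure.
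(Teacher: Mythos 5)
Your construction coincides with the paper's: the paper also takes $X_0=\ombar\setminus\Gamma^{\infty}$ with $\Gamma^{\infty}=\{x\in\po:|\mu|(B(x,r))=\infty \text{ for all } r>0\}$ (your $Y$), and for the proof it only refers to Theorem 5.8 of \cite{AM1}. Your reduction of regularity on $X_0$ to the single claim that every $u\in\mfm$ satisfies $\widetilde u=0$ r.q.e.\ on $Y$, followed by truncation and multiplication by cut-offs vanishing near $Y$ (the standard argument for the part of a regular Dirichlet form on a relatively open set), is exactly the mechanism that reference --- and the paper's subsequent discussion of $\mathcal F^{\mu^+}_{\Gamma^{\infty}}$ --- relies on. The uniform-density half, the verification that $Y$ is relatively closed, and the observation that $|\mu|$ restricted to $X_0$ is a Radon measure are all fine.

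The genuine gap is the one you flag yourself, and the one-line argument you offer for it does not work as written. From $\Cap(\{x\in Y:|\widetilde u(x)|\ge\epsilon\})>0$ you cannot conclude that this set has infinite $|\mu|$-measure: local infiniteness of $|\mu|$ at every point of $Y$ is a statement about relative balls, and a non-polar subset of $Y$ need not contain (even up to a relatively polar set) any relative ball, while ``charges no relatively polar set'' gives no lower bound on the measure of non-polar sets. What makes the conclusion true is the relative quasi-continuity of $\widetilde u$ --- so that $\{|\widetilde u|>\epsilon\}$ is relatively quasi-open and has finite $|\mu|$-measure by Chebyshev --- combined with the lemma, essentially the content of Theorem 5.8 in \cite{AM1} (see also \cite{AM2}), that a relatively quasi-open, non-polar set contained in $\Gamma^{\infty}$ must carry infinite $|\mu|$-mass; the smoothness of $|\mu|$ (the generalized nest $(F_n)$) enters there in an essential way. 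Until that lemma is supplied your proof is incomplete --- though, to be fair, the paper's own proof of Theorem \ref{thm:infty} elides exactly the same point, and the paper proves the present proposition only by citation.
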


For the proof of the above two propositions, one can follow the proof of Theorem 5.8. in \cite{AM1}. The relatively open set can be explicitly written as follow
\[
 X_0=\ombar\setminus\{x\in\po: |\mu|\left(B(x,r)\right)=\infty, \forall r>0\}
\]

Now define the following subset of $\po$,
\[
 \Gamma^{\infty}:=\{x\in\po: |\mu|\left(B(x,r)\right)=\infty, \forall r>0\}
\]

Note that $\Gamma^{\infty}$ is a relatively closed subset of $\po$. Since $\Gamma^{|\mu|}:=\po\setminus\Gamma^{\infty}$ is a locally compact metric space, it follows from \cite[Theorem 2.18. p.48]{Ru} that $|\mu|_{|\Gamma^{|\mu|}}$ is  a regular Borel measure. Therefore $|\mu|$ is a Radon measure on $\Gamma^{|\mu|}$. As for Theorem \ref{thm:infty}, we have $\widetilde{u}_{|\Gamma_{\infty}}=0$ r.q.e. for each function $u\in\mathcal F^{\mu^+}_{\Gamma^{\infty}}$, where 
\[
 \mathcal F^{\mu^+}_{\Gamma^{\infty}}:=\{u\in\mfmp:\widetilde{u}=0 \text{ r.q.e. on }\Gamma^{\infty}\}
\]

It follows that $\Delta_{\mu}$ is the Laplacian with Dirichlet boundary conditions on $\Gamma^{\infty}$, and with Robin boundary conditions on $\Gamma^{|\mu|}$.
\section{Domination results}

In this section, we will prove a domination theorem. It says that the semigroup $(\emu)_{t\geq 0}$ is sandwitched between $(\emup)_{t\geq 0}$ and $(\emun)_{t\geq 0}$. A very natural quetion arise: Is the converse also true? The answer is yes under a locality assymption.

\begin{definition}
 Let $E$ be an ordered vector space
\begin{enumerate}
 \item $E$ is a vector lattice if any two elements $f,g\in E$ have a supremum, which is denoted by $f\vee g$, and an infinimum, denoted by $f\wedge g$.
  \item Let $E$ be a Banach lattice. A linear subspace $I$ of $E$ is called an ideal if $f\in I$ and $g\in E$ such that $|g|\leq|f|$ imply $g\in I$.
\end{enumerate}

\end{definition}

\begin{theorem}\label{thm:dom}
 Let $\mu\in S(\po)-S_K(\po)$, and $\dmu$ the closed operator on the $L^2(\Omega)$ associated with the closed form $(\me^{\mu},\mf^{\mu})$. Then
\[
 0\leq\emup\leq\emu\leq\emun
\]
for all $t\geq 0$ in the sens of positive operators.
\end{theorem}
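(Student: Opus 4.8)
The plan is to establish the two operator inequalities $\emup \leq \emu$ and $\emu \leq \emun$ separately, in each case by invoking the Ouhabaz domination criterion for positivity-preserving semigroups (the characterization in terms of forms, cf.\ the references \cite{O} if available, or \cite{AW2,BM}). Recall that for two closed lower-semibounded forms $(a, D(a))$ and $(b, D(b))$ generating semigroups $(S(t))$, $(T(t))$ with $(T(t))$ positive, one has $0 \leq S(t) \leq T(t)$ for all $t \geq 0$ precisely when: (i) $D(a)$ is an ideal of $D(b)$ in the sense that $u \in D(a)$, $v \in D(b)$, $0 \leq v \leq |u|$ implies $v \in D(a)$ and $|u| \in D(a)$; and (ii) $a(u,v) \geq b(|u|,|v|)$ for all $0 \leq u, v \in D(a)$. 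So the whole argument reduces to verifying these two conditions for each of the two comparisons, using the concrete description $\mem(u,v) = \int_\Omega \nabla u\nabla v\,dx + \int_{\po} uv\,d\mu^+ - \int_{\po} uv\,d\mu^-$ on $\mfm = \Huntild \cap L^2(\po,|\mu|)$, together with $\memp$ on $\mfmp = \Huntild \cap L^2(\po,\mu^+)$ and $\memn$ on $\mf = \Huntild$ (note $-\mu^-$ as a negative measure gives form domain all of $\Huntild$, since the negative perturbation only lowers the form and the relative bound is $<1$ by the hypothesis $\mu \in S(\po) - S_K(\po)$ and Theorem \ref{thm:ebound}).

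First I would treat $\emu \leq \emun$. Here $a = \mem$ with domain $\mfm$ and $b = \memn$ with domain $\mf = \Huntild$. The ideal property is immediate: $\mfm \subset \Huntild$, and if $u \in \mfm$ and $0 \leq v \leq |u|$ with $v \in \Huntild$, then since $\widetilde{H}^1$ is a lattice and $\int_{\po}|v|^2 d|\mu| \leq \int_{\po}|u|^2 d|\mu| < \infty$, we get $v \in \mfm$ and likewise $|u| \in \mfm$. For the form inequality, for $0 \leq u,v \in \mfm$ we have $\mem(u,v) = \int_\Omega \nabla u\nabla v + \int_{\po} uv\,d\mu^+ - \int_{\po}uv\,d\mu^- \geq \int_\Omega \nabla u\nabla v - \int_{\po}uv\,d\mu^- = \memn(u,v)$ because $\int_{\po}uv\,d\mu^+ \geq 0$; and $\int_\Omega \nabla u\nabla v$ agrees for $u,v$ and for $|u|=u$, $|v|=v$. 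Symmetrically, for $\emup \leq \emu$ I take $a = \mem$ on $\mfm$ and $b = \memp$ on $\mfmp$. Now $\mfm \subset \mfmp$ (indeed $\mfm = \mf \cap L^2(\po,\mu^+)$ by the third item of the proposition preceding, since the $\mu^-$-part is controlled), the ideal property follows as before, and for $0 \leq u,v \in \mfm$, $\memp(u,v) - \mem(u,v) = \int_{\po} uv\,d\mu^- \geq 0$, giving $\mem \leq \memp$ on the positive cone, which is the required inequality in the correct direction.

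The step I expect to be the main obstacle is the bookkeeping around the \emph{form domains} and the ideal property when the measures are not Radon — in particular making sure the Ouhabaz criterion applies cleanly even though $\mem$ and $\memp$ need not be regular Dirichlet forms and their domains can be strictly smaller than $\Huntild$ (indeed, on the set $\Gamma^\infty$ the quasi-continuous representatives vanish, as in Theorem \ref{thm:infty}). One must check that the lattice operations stay inside these domains: this uses that $\Huntild$ is stable under $u \mapsto |u|$, $u \mapsto u^+$ and truncation, that quasi-continuous representatives behave well under these operations r.q.e., and that the $L^2(\po,|\mu|)$-integrability condition is preserved under $v \mapsto v$ with $0 \leq v \leq |u|$. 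The positivity of $(\emun)$ and $(\emu)$ themselves, needed to even state the domination, follows from Proposition \ref{pro:beta}-type estimates and the sub-Markovian property already recorded for the positive-measure case (Proposition 2.11), extended to the signed case via the relative-bound hypothesis; I would note this at the outset. Once the two comparisons are in hand, chaining them gives $0 \leq \emup \leq \emu \leq \emun$ for all $t \geq 0$.
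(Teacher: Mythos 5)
Your proposal is correct and follows essentially the same route as the paper: both inequalities are obtained from the Ouhabaz form-domination criterion (Theorem \ref{thm:ou}), checking the form inequalities on the positive cone and the ideal property of $\mfmp=\mfm$ in $\Huntild$ via relatively quasi-continuous representatives and the fact that $|\mu|$ charges no relatively polar set. The only blemish is a swapped labelling of $a$ and $b$ in your second comparison relative to your own statement of the criterion, but the inequality you actually verify ($\mem\leq\memp$ on positives) is the correct one, so the argument stands.
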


 One can see easily from the probabilistic representation of $(\emu)_{t\geq 0}$ that the Propostion is true, but here we will prove it using the following result characterizing domination of positive semigroups due to Ouhabaz and contained in \cite[Th\'eor\`eme 3.1.7.]{Ou},

\begin{theorem}\label{thm:ou}(\textbf{Ouhabaz})
  Let $T$ and $S$ be two positive symmetric $C_0-$semigroups on $L^2(\Omega)$. Let $(a,D(a))$ be the closed form associated with $T$ and $(b,D(b))$ the closed form associated with $S$. Then the following assertions are equivalent.
\begin{enumerate}
 \item $T(t)\leq S(t)$ for all $t\geq 0$ in the sense of positive operators.
  \item $D(a)$ is an ideal of $D(b)$ and $b(u,v)\leq a(u,v)$ for all $u,v\in D(a)_+$.
\end{enumerate}
 
\end{theorem}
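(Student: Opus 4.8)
The plan is to prove the two implications $(2)\Rightarrow(1)$ and $(1)\Rightarrow(2)$ separately, using two standard inputs: the first Beurling--Deny criterion, which identifies the positivity of the symmetric semigroups $T,S$ with the stability of $D(a),D(b)$ under the lattice operations $u\mapsto u^{\pm},|u|$ together with $a(u^+,u^-)\le 0$, $b(u^+,u^-)\le 0$; and Ouhabaz's companion criterion for the invariance of a closed convex set under a symmetric $C_0$-semigroup, also in \cite{Ou}. After replacing $T(t),S(t)$ by $e^{\omega t}T(t),e^{\omega t}S(t)$ (which preserves positivity, the domination, the ideal property, and the inequality $b\le a$) we may assume $a,b\ge 0$, so that the resolvents $R_\lambda,Q_\lambda$ of $T,S$ exist and are positive for all $\lambda>0$.

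For $(2)\Rightarrow(1)$ I would argue directly at the level of resolvents. Fix $f\ge 0$ and $\lambda>0$ and set $u=R_\lambda f$, $w=Q_\lambda f$; by positivity $u,w\ge 0$, with $u\in D(a)\subseteq D(b)$ and $w\in D(b)$. Since $0\le (u-w)^+\le u$ and $u\in D(a)$, the ideal hypothesis gives $\varphi:=(u-w)^+\in D(a)$. Subtracting the two resolvent identities $a(u,\varphi)+\lambda(u,\varphi)=(f,\varphi)=b(w,\varphi)+\lambda(w,\varphi)$ yields
\[
 \lambda\,\|(u-w)^+\|_2^2=\lambda(u-w,\varphi)=b(w,\varphi)-a(u,\varphi)=-\bigl[(a(u,\varphi)-b(u,\varphi))+b(u-w,\varphi)\bigr].
\]
Both bracketed terms are nonnegative: the first because $u,\varphi\in D(a)_+$ and $b\le a$ there, the second because $b(u-w,\varphi)=b(u-w,(u-w)^+)\ge b((u-w)^+,(u-w)^+)\ge 0$ by the Beurling--Deny inequality for $b$. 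Hence $(u-w)^+=0$, i.e. $R_\lambda f\le Q_\lambda f$, so $0\le R_\lambda\le Q_\lambda$ for every $\lambda>0$; passing to powers preserves the ordering of positive operators, and the exponential formula $T(t)f=\lim_n(\tfrac nt R_{n/t})^n f$ then gives $T(t)\le S(t)$.

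For $(1)\Rightarrow(2)$ I would pass to $\mathcal H=L^2(\Omega)\times L^2(\Omega)$. Since $T,S$ are positive, $T(t)\le S(t)$ is equivalent to $|T(t)f|\le S(t)|f|$ for all $f$, hence to the invariance under $(T(t)\oplus S(t))_{t\ge0}$ of the closed convex set $C=\{(f,g)\in\mathcal H:|f|\le g\}$; the product semigroup is generated by $\mathbf a=a\oplus b$ on $D(a)\times D(b)$. Ouhabaz's invariance criterion then delivers, for the orthogonal projection $P$ onto $C$, that $P(D(a)\times D(b))\subseteq D(a)\times D(b)$ and $\mathbf a(Pw,w-Pw)\ge 0$ for all $w$. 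Here $P$ is computed pointwise as the projection onto the planar cone $\{(s,r):r\ge|s|\}$: $P(u,v)=(u,v)$ where $v\ge|u|$, $P(u,v)=(0,0)$ where $v\le-|u|$, and $P(u,v)=\tfrac12\bigl(u+v\,\mathrm{sgn}\,u,\;|u|+v\bigr)$ on $\{-|u|<v<|u|\}$. Feeding the criterion suitable test pairs recovers (2): with $v=0$ one gets $P(u,0)=(u/2,|u|/2)$, so $|u|\in D(b)$ for every $u\in D(a)$ and hence $D(a)\subseteq D(b)$; with $w=(f,-g)$ for $0\le g\le f$, $f\in D(a)$, $g\in D(b)$, the pair lies a.e. in the middle region and the first component $\tfrac12(f-g)$ of $Pw$ lies in $D(a)$, so $g\in D(a)$, which with the lattice stability is precisely the ideal property; and with $w=(\phi,-\psi)$ for $0\le\psi\le\phi$ in $D(a)$ the inequality $\mathbf a(Pw,w-Pw)\ge 0$ collapses to $(a-b)(\phi,\phi)\ge(a-b)(\psi,\psi)$. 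The monotonicity of the symmetric form $a-b$ along $0\le\psi\le\phi$ is equivalent, by differentiating $s\mapsto(a-b)(u+sv,u+sv)$ at $s=0$, to $b(u,v)\le a(u,v)$ for all $u,v\in D(a)_+$, completing (2).

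The substantive step is the invariance criterion for the genuinely non-linear projection $P$: verifying $P(D(a)\times D(b))\subseteq D(a)\times D(b)$ and $\mathbf a(Pw,w-Pw)\ge 0$ for an \emph{arbitrary} $w$, where the three regions of $P$ coexist on $\Omega$ and the a priori non-local forms $a,b$ do not decompose over them. This forces one to write the components of $Pw$ as honest lattice combinations of $u,v,|u|$ and to lean at every turn on the stability and Beurling--Deny inequalities coming from positivity. In my plan this is exactly the content borrowed from \cite{Ou}; our own contributions---the direct resolvent argument for $(2)\Rightarrow(1)$, the reduction of domination to invariance of $C$, and the test-pair extraction of (2)---are comparatively elementary, but each genuinely requires the positivity of $T$ and $S$ and the reduction of the ideal property to the cone $0\le g\le f$.
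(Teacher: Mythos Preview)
The paper does not prove this theorem. It is quoted as a known result of Ouhabaz, attributed to \cite[Th\'eor\`eme~3.1.7]{Ou}, and used as a black box in the proof of Theorem~\ref{thm:dom}; there is no argument in the paper to compare your attempt against.

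That said, your sketch is essentially the original approach from \cite{Ou} and is correct in outline. The resolvent argument for $(2)\Rightarrow(1)$ is clean: the only point worth making explicit is that $\varphi=(u-w)^+$ lies in $D(b)$ before you invoke the ideal hypothesis (this uses $u\in D(a)\subset D(b)$, $w\in D(b)$, and the first Beurling--Deny criterion for $b$), and that $0\le R_\lambda\le Q_\lambda$ iterates because positive operators on a Banach lattice are order-preserving. For $(1)\Rightarrow(2)$, the reduction to invariance of $C=\{(f,g):|f|\le g\}$ under $T\oplus S$ and the subsequent use of the closed-convex-set invariance criterion is precisely Ouhabaz's method. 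Two minor points in your extraction step: first, from the test pair $(u,0)$ you only obtain $|u|\in D(b)$ for $u\in D(a)$; to conclude $D(a)\subset D(b)$ you must add that $D(a)$ is itself a sublattice (Beurling--Deny for $a$), so $u^{\pm}\in D(a)$ and hence $u^{\pm}=|u^{\pm}|\in D(b)$. Second, the piecewise formula for the projection $P$ onto $C$ needs a word about the null sets $\{v=\pm|u|\}$ where the regions meet; the formulas agree there, but this should be said in a full write-up.
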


\begin{proof}(of Theorem \ref{thm:dom}) Recall that the forms associated with $\emup$, $\emun$ and $\emu$ are given repectively by 
\[
 \me_{\mu^+}(u,v)=\int_{\Omega}\nabla u\nabla v dx+\int_{\po}uvd\mu^+,\text{  }\forall u,v\in \mf^{\mu^+}=\Huntild\cap L^2(\po,\mu^+)
\]
\[
 \me_{\mu}(u,v)=\int_{\Omega}\nabla u\nabla v dx+\int_{\po}uvd\mu,\text{  }\forall u,v\in \mf^{\mu}=\mf^{\mu^+}
\]
and
\[
 \me_{-\mu^-}(u,v)=\int_{\Omega}\nabla u\nabla v dx-\int_{\po}uvd\mu^-,\text{  }\forall u,v\in \mf^{\mu^-}=\Huntild
\]
It is clear that $\mem(u,v)\leq\memp(u,v)$ for all $u,v\in\mfm_+=\mfmp_+$. Then by Theorem \ref{thm:ou} we have $\emup\leq\emu$ for all $t\geq0$ in the sense of positive operator. On the other hand, one have $\memn(u,v)\leq\mem(u,v)$ for all $u,v\in\mfmp_+$. It still to prove that $\mfmp$ is an ideal of $\Huntild$. Let $u\in\mfmp$ and $v\in\Huntild$ such that $0\leq|v|\leq|u|$.We may assume that $u$ and $v$ are r.q.c., it follows that $0\leq|v|\leq|u|$ r.q.e. and therefore $\mu^+-$a.e.(since $\mu$ charges no set of zero relative capacity) . It follows that 
\[
 \int_{\po}|v|^2d\mu^+\leq  \int_{\po}|u|^2d\mu^+<\infty
\]
 and then $v\in L^2(\po,\mu^+)$, which implies that $v\in\Huntild\cap L^2(\po,\mu^+)$.

\end{proof}
For two positive Borel measures $\mu$ and $\nu$ on $\po$ we write $\nu\leq\mu$ if $\nu(A)\leq\mu(A)$ for all $A\in\mathcal B(\po)$, and for two signed Borel measures $\mu$ and $\nu$ on $\po$ we write $\nu\leq\mu$ if $\nu^+\leq\mu^+$ and $\nu^-\geq\mu^-$.
\begin{proposition}
  Let $\mu,\nu\in S(\po)-S_K(\po)$ such that $\nu\leq\mu$. Let $\dmu$ and $\Delta_{\nu}$ denote the selfadjoint operators on $L^2(\Omega)$ associated respectively with the closed forms $(\mem,\mfm)$ and $(\me_{\nu},\mf^{\nu})$. Then
\[
 0\leq\emup\leq\emu\leq e^{-t\Delta_{\nu}}\leq e^{-\Delta_{-\nu^-}}
\]
for all $t\geq 0$ in the sense of positive operators.
\end{proposition}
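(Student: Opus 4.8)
The plan is to reduce everything to Theorem~\ref{thm:dom} and Theorem~\ref{thm:ou}, chaining together four comparisons. The outer two inequalities $0\le\emup$ and $e^{-t\Delta_{\nu}}\le e^{-t\Delta_{-\nu^-}}$ are instances of Theorem~\ref{thm:dom} applied to $\mu$ and to $\nu$ respectively (positivity of $\emup$ is part of that theorem, and $e^{-t\Delta_{\nu}}\le e^{-t\Delta_{-\nu^-}}$ is the right-hand sandwich inequality for $\nu$). The inequality $\emup\le\emu$ is again exactly Theorem~\ref{thm:dom} for $\mu$. So the only genuinely new point is the middle inequality $\emu\le e^{-t\Delta_{\nu}}$, and the whole proof comes down to verifying the two conditions of Ouhabaz's criterion for the pair $(S,T)=(e^{-t\Delta_{\nu}},\emu)$, i.e.\ that $\mfm$ is an ideal of $\mf^{\nu}$ and that $\me_{\nu}(u,v)\le\mem(u,v)$ for all $u,v\in\mfm_+$.

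For the form inequality, first unwind the sign convention $\nu\le\mu$: it means $\nu^+\le\mu^+$ and $\nu^-\ge\mu^-$. Both forms have the same Dirichlet part $\int_\Omega\nabla u\nabla v\,dx$, so the comparison reduces to the boundary terms, and for $u,v\in\mfm_+$ one has
\[
\me_{\nu}(u,v)-\mem(u,v)=\int_{\po}uv\,d\nu^+-\int_{\po}uv\,d\nu^--\int_{\po}uv\,d\mu^++\int_{\po}uv\,d\mu^-.
\]
Grouping, $\int uv\,d(\nu^+-\mu^+)\le 0$ since $\nu^+\le\mu^+$ and $uv\ge0$, while $\int uv\,d(\mu^--\nu^-)\le0$ since $\nu^-\ge\mu^-$; hence $\me_{\nu}(u,v)\le\mem(u,v)$. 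One must make sure this integral manipulation is legitimate: it should be done on the relatively quasi-continuous representatives, using that all four measures charge no relatively polar set and that $u,v\in\mfm=\mf^{\mu^+}\cap\mf^{\mu^+}$ lie in $L^2(\po,\mu^\pm)$; for the $\nu$-integrals one needs $u,v\in L^2(\po,\nu^\pm)$, which follows from $\nu^+\le\mu^+$ (controlling the $\nu^+$-integral by the finite $\mu^+$-integral) and from $\nu^-\le|\nu|\le$ being handled by membership in $\mf^{\nu}=\mf^{\mu^+}\cap L^2(\po,|\nu|)$ — so in fact one should observe $\mfm\subset\mf^{\nu}$ as a preliminary step.

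For the ideal property, take $u\in\mfm$ and $v\in\mf^{\nu}$ with $0\le|v|\le|u|$; passing to r.q.c.\ representatives gives $0\le|v|\le|u|$ r.q.e., hence $\mu^+$-a.e.\ and $\nu^+$-a.e. Then $\int_{\po}|v|^2\,d\mu^+\le\int_{\po}|u|^2\,d\mu^+<\infty$ because $\mu^+\ge\nu^+$ would go the wrong way — here one instead uses $\nu^+\le\mu^+$ directly: $\int|v|^2\,d\nu^+\le\int|u|^2\,d\nu^+$, and more to the point one needs $v\in\mfm=\mf^{\mu^+}\cap L^2(\po,\mu^+)$, so the estimate required is $\int_{\po}|v|^2\,d\mu^+\le\int_{\po}|u|^2\,d\mu^+<\infty$, which holds since $|v|\le|u|$ pointwise r.q.e. and $u\in\mfm$. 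Since $\mf^{\nu}\subset\Huntild$ and $\Huntild$ is itself a lattice, $v\in\Huntild$, and together with $v\in L^2(\po,\mu^+)$ this gives $v\in\mfm$, establishing the ideal property.

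I expect the main obstacle to be bookkeeping with the sign convention and the four measures $\mu^\pm,\nu^\pm$ rather than any deep difficulty: one must be careful that $\nu\le\mu$ translates into inequalities in \emph{opposite} directions for the positive and negative parts, and that the domination chain alternates which semigroup is the ``larger'' one. Once $\mfm\subset\mf^{\nu}$ and the two Ouhabaz conditions are checked, the displayed chain of inequalities follows by transitivity of domination of positive operators, combining the new middle link with the three links supplied by Theorem~\ref{thm:dom}.
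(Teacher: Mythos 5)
Your proposal is correct and follows essentially the same route as the paper: reduce to the single new inequality $\emu\leq e^{-t\Delta_{\nu}}$, then verify Ouhabaz's two conditions by showing $\mfm=\mf^{\mu^+}$ is an ideal of $\mf^{\nu}=\mf^{\nu^+}$ (via r.q.c.\ representatives and the fact that the measures charge no relatively polar sets) and comparing the boundary terms using $\nu^+\leq\mu^+$, $\nu^-\geq\mu^-$. Note that your direction $\me_{\nu}(u,v)\leq\mem(u,v)$ on $\mfm_+$ is the one actually required by Theorem \ref{thm:ou}; the inequality as printed in the paper's proof is reversed, evidently a typo.
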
 

\begin{proof}
 It suffices to show that $\emu\leq e^{-t\Delta_{\nu}}$ for all $t\geq 0$ in the sense of positive operators.We have $\nu^+\leq\mu^+$, then $L^2(\po,\mu^+)\subset L^2(\po,\nu^+)$. Thus $\mfmp$ is continuously embedded into $\mathcal F^{\nu^+}$.

We claim that $\mfmp$ is an ideal of $\mathcal F^{\nu^+}$. In fact, let $u\in\mfmp$ and $v\in\mathcal F^{\nu^+}$ be such that $0\leq |v|\leq|u|$. We have to show that $v\in\mfmp$. We may assume that $u$ and $v$ are r.q.c. It is clear that $v\in\Huntild$. Since $0\leq |v|\leq|u|$ a.e., it follows that $0\leq |v|\leq|u|$ r.q.e and therefore $\mu^+$ and $\nu^+$ a.e. (since $\mu^+,\nu^+\in S(\po)$). It then follow
\[
 \int_{\po}|v|^2d\nu^+\leq  \int_{\po}|u|^2d\mu^+<\infty
\]
and therefore $v\in L^2(\po,\mu^+)$ which proves the claim.

Now, let $u,v\in\mfmp_+$. It follows that $u$ and $v$ are positive r.q.e. on $\ombar$ and thus $\mu$ a.e. on $\po$. We have $\nu^+\leq\mu^+$ and $\nu^-\geq\mu^-$, therefore $\mem(u,v)\leq\mathcal E_{\nu}(u,v)$, which completes the proof.
\end{proof}

\begin{proposition}\label{pro:loc}
 Let $\mu\in S(\po)-S_K(\po)$. Then $(\mem,\mfmp)$ is local.
\end{proposition}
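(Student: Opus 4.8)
The goal is to show that the Dirichlet form $(\mem,\mfmp)$ is local, meaning that $\mem(u,v)=0$ whenever $u,v\in\mfmp$ have disjoint supports (or, in the usual formulation for Dirichlet forms, whenever $v$ is constant on a neighbourhood of the support of $u$). The plan is to reduce the locality of $\mem$ to the locality of the unperturbed form $(\me,\mf)$, which is already known (it is stated in Subsection~2.3 that $(\me,\mf)$ is local, hence the associated process is a diffusion). I would first recall that $\mfmp=\Huntild\cap L^2(\po,\mu^+)\subset\mf$, so any pair $u,v\in\mfmp$ also lies in $\mf$.

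The key observation is the decomposition $\mem(u,v)=\me(u,v)+\int_{\po}\widetilde u\,\widetilde v\,d\mu$, valid for $u,v\in\mfmp$ (using relatively quasi-continuous representatives, which is legitimate since $\mu^+$ and $\mu^-$ charge no set of zero relative capacity). First I would take $u,v\in\mfmp$ with disjoint compact supports in $\ombar$. Since $(\me,\mf)$ is local, $\me(u,v)=0$. For the boundary term, I would argue that $\widetilde u\,\widetilde v=0$ $|\mu|$-a.e.: if the supports of $u$ and $v$ are disjoint, then at $|\mu|$-almost every point of $\po$ at least one of $\widetilde u$, $\widetilde v$ vanishes (here one uses that the support condition, phrased in terms of $m$-essential supports, transfers to a statement holding relatively quasi-everywhere for the quasi-continuous representatives, and $|\mu|$ charges no relatively polar set). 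Hence $\int_{\po}\widetilde u\,\widetilde v\,d\mu=0$ and therefore $\mem(u,v)=0$.

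Alternatively, and perhaps more cleanly in the Dirichlet-form framework, I would verify the standard locality criterion: if $u,v\in\mfmp$ and $v$ is constant on a relatively open neighbourhood of $\mathrm{supp}[u]$, then $\mem(u,v)=0$. Again $\me(u,v)=0$ by locality of $\me$; and on the boundary, where $\widetilde v$ is constant equal to some $c$ near $\mathrm{supp}[\widetilde u]$, the boundary integral reduces to $c\int_{\po}\widetilde u\,d\mu$, which one must handle — but in fact the cleanest route is to reduce to the disjoint-support formulation (by the truncation $v\mapsto v - c$ argument, noting $v-c$ need not be in $\mfmp$, so one instead uses the characterization of locality via the associated semigroup/resolvent or via Ouhabaz's criterion together with the positive smooth measure case already treated).

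The main obstacle I anticipate is the passage from $m$-a.e. statements about supports of $u$ and $v$ to statements that hold $|\mu|$-a.e. on $\po$: since $|\mu|$ is singular with respect to $m$, one genuinely needs the relative quasi-continuity machinery and the fact that $\mu\in S(\po)-S_K(\po)$ charges no relatively polar set, so that "disjoint supports in the $m$-sense" forces $\widetilde u\,\widetilde v=0$ $|\mu|$-a.e. Once this bridging lemma is in place, the locality of $\mem$ follows immediately from the locality of $\me$, since the perturbation term vanishes identically on the relevant pairs.
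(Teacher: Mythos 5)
Your argument is correct and is essentially the proof the paper delegates to Warma's thesis (Prop.\ 3.4.20 in \cite{Wa}): the gradient part is local, and disjoint compact $m$-supports force $\widetilde{u}\,\widetilde{v}=0$ r.q.e.\ on $\ombar$ (a relatively quasi-continuous function vanishing $m$-a.e.\ on a relatively open set vanishes r.q.e.\ there), hence $|\mu|$-a.e.\ since $|\mu|$ charges no relatively polar set, so the boundary term vanishes. The detour in your second paragraph through the ``constant on a neighbourhood of $\mathrm{supp}[u]$'' criterion is unnecessary and best dropped --- that is the \emph{strong} locality property, which the Robin form does not satisfy on the boundary --- whereas your first, disjoint-support argument is exactly what the locality notion of \cite{FOT} requires.
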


\begin{proof}
 The proof is similar to Proposition 3.4.20. in \cite{Wa}.
\end{proof}

The main result of this paper is the converse of Theorem \ref{thm:dom}. More precisely, if $(T(t))_{t\geq 0}$ is a $C_0-$semigroup on $L^2(\Omega)$ satisfaying
\[
 \emup\leq T(t)\leq\emun
\]
for all $t\geq 0$ in the sense of positive operators, under which conditions $T(t)$ is given by a signed measure $\nu$ on $\po$? We suppose that $\Gamma^{\mu}=\po$, we have then the following theorem:

\begin{theorem}\label{thm:dom2}
 Let $\Omega\subset\mathbb R^d$ be an open set and $T=(T(t))_{t\geq 0}$ be a symmetric $C_0-$semigroup on $L^2(\Omega)$ satisfaying
\[
 \emup\leq T(t)\leq\emun
\]
for all $t\geq 0$ in the sense of positive operators, where $\mu^+\in S(\po)$ and $\mu^-\in S_K(\po)$. Let $(\me,D(\me))$ be the closed form on $L^2(\Omega)$ associated with $T$. Suppose in addition that $(\me,D(\me))$ is regular. Then the following assertions are equivalent to each other:
\begin{enumerate}
 \item  $T(t)=e^{-t\Delta_{\nu-\mu^-}}$ for a unique positive Radon measure $\nu$ charging no set of zero relative capacity on $\po$.
 \item $(\me,D(\me))$ is local.
\end{enumerate}
\end{theorem}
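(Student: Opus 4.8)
The implication $(2)\Rightarrow(1)$ is the substantial one; $(1)\Rightarrow(2)$ is immediate. If $T(t)=e^{-t\Delta_{\nu-\mu^-}}$ for a positive Radon measure $\nu$ on $\po$ charging no set of zero relative capacity, then $\nu\in S(\po)$ and, by Proposition~\ref{pro:loc}, the Robin form $(\me_\nu,\Huntild\cap L^2(\po,\nu))$ is local; since $\mu^-\in S_K(\po)\subset\hat S_K(\po)$ Theorem~\ref{thm:ebound} gives $\Huntild\subset L^2(\po,\mu^-)$, so $(\me,D(\me))=(\me_\nu-Q_{\mu^-},\Huntild\cap L^2(\po,\nu))$; subtracting the boundary term $Q_{\mu^-}$, which vanishes on pairs of functions with disjoint supports, does not destroy locality, and hence $(\me,D(\me))$ is local.

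For $(2)\Rightarrow(1)$ the plan is to \emph{absorb $\mu^-$ back into the form} and reduce to the classification of local regular Dirichlet forms lying between the Neumann form $(\me,\mf)$ and the Dirichlet form $(\me,H^1_0(\Omega))$, which identifies such forms with the Robin forms $\me_\nu$, $\nu$ a positive Radon measure on $\po$ charging no set of zero relative capacity (this is the mechanism behind Theorem~5.8 of \cite{AM1}; see also \cite{AW1}). First I would feed the two dominations into Ouhabaz's criterion (Theorem~\ref{thm:ou}): from $\emup\le T(t)$, $\mfmp=\Huntild\cap L^2(\po,\mu^+)$ is an ideal of $D(\me)$ and $\me(u,v)\le\memp(u,v)$ for $u,v\in\mfmp_+$; from $T(t)\le\emun$, $D(\me)$ is an ideal of the form domain $\Huntild$ of $\memn$ and $\memn(u,v)\le\me(u,v)$ for $u,v\in D(\me)_+$. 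In particular $H^1_0(\Omega)\subset\mfmp\subset D(\me)\subset\Huntild$, and $D(\me)$ is an ideal of $\Huntild$.

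Next I would pass to the shifted form $\me^\sharp:=\me+Q_{\mu^-}$ on $D(\me^\sharp):=D(\me)$. As $\mu^-\in S_K(\po)$ has $c(\mu^-)=0$, Theorem~\ref{thm:ebound} makes $Q_{\mu^-}$ $\me$-bounded with relative bound $0$, so $\me^\sharp$ is closed, its form norm is equivalent to that of $\me$ (whence $\me^\sharp$ is regular since $\me$ is), and $\me^\sharp$ is local ($\me$ is local and $Q_{\mu^-}$ is a boundary term). From $\memn(u,u)\le\me(u,u)$ I get $\me^\sharp(u,u)\ge\int_\Omega|\nabla u|^2\,dx$ for $u\in D(\me)_+$, hence for all $u\in D(\me)$ because positivity of $T$ gives $|u|\in D(\me)$ with $\me(|u|,|u|)\le\me(u,u)$; so $\me^\sharp\ge0$. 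The associated semigroup $S$ is then positive, $D(\me^\sharp)=D(\me)$ is an ideal of $\Huntild$, and $\int_\Omega|\nabla u|^2\,dx\le\me^\sharp(u,u)$ on $D(\me)_+$; so Theorem~\ref{thm:ou} yields $0\le S(t)\le\eN$, and sub-markovianity of $\eN$ forces that of $S$. Thus $(\me^\sharp,D(\me))$ is a \emph{local regular Dirichlet form}. Finally, squeezing on the positive cone — using $\memn(u,u)=\int_\Omega|\nabla u|^2\,dx=\memp(u,u)$ for $u\in H^1_0(\Omega)$, as boundary traces of such $u$ vanish relatively quasi-everywhere and $|\mu|$ charges no relatively polar set — gives $\me^\sharp(u,u)=\int_\Omega|\nabla u|^2\,dx$ for $u\in H^1_0(\Omega)_+$, and testing on $u^+$, $u^-$ and $|u|$ extends this to $\me^\sharp(u,v)=\int_\Omega\nabla u\nabla v\,dx$ for all $u,v\in H^1_0(\Omega)$.

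Now $\me^\sharp$ is a local regular Dirichlet form with $H^1_0(\Omega)\subset D(\me^\sharp)\subset\Huntild$ coinciding with the Dirichlet integral on $H^1_0(\Omega)$, so the classification theorem applies: there is a unique positive Radon measure $\nu$ on $\po$ charging no set of zero relative capacity with $D(\me^\sharp)=\Huntild\cap L^2(\po,\nu)$ and $\me^\sharp(u,v)=\int_\Omega\nabla u\nabla v\,dx+\int_\po\widetilde u\,\widetilde v\,d\nu$, i.e.\ $\me^\sharp=\me_\nu$. Subtracting $Q_{\mu^-}$ again, $(\me,D(\me))=(\me_{\nu-\mu^-},\Huntild\cap L^2(\po,\nu))$, so $T(t)=e^{-t\Delta_{\nu-\mu^-}}$; uniqueness of $\nu$ is uniqueness of the Beurling--Deny killing measure. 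I expect the real obstacle to be this last step: via the Beurling--Deny decomposition of $\me^\sharp$ one must show that locality, the inclusion $D(\me^\sharp)\subset\Huntild$, and the a priori bound $\me^\sharp(u,u)\le\int_\Omega|\nabla u|^2\,dx+\int_\po u^2\,d|\mu|$ on $\mfmp_+$ together force the strongly local part of $\me^\sharp$ to be the Dirichlet integral (no tangential diffusion on $\po$) and its killing measure to be Radon and concentrated on $\po$ — this is where the standing assumption $\Gamma^\mu=\po$ enters. A secondary but essential point is that $\me$ itself need not be a Dirichlet form (its majorant $\emun$ is not sub-markovian), so sub-markovianity is available only after the shift by $Q_{\mu^-}$ and the comparison with the Neumann semigroup $\eN$.
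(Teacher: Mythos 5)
Your reduction is sound and would reach the theorem, but it is organized quite differently from the paper's argument. You shift by $Q_{\mu^-}$, verify that $\me^{\sharp}:=\me+Q_{\mu^-}$ is a local regular Dirichlet form sandwiched between the Dirichlet and Neumann forms, and then invoke the classification of such forms as Robin forms $\me_{\nu}$. Two remarks on that. First, the classification you need is not Theorem 5.8 of \cite{AM1} (which concerns regularity of the perturbed form) but Theorem 4.1 of \cite{AW2} --- the reference the paper itself points to in the remark following its proof. Second, the step you flag as ``the real obstacle'' is precisely the content of that classification, and it is exactly what the paper proves inline instead of quoting: the paper runs the Beurling--Deny-type construction directly on $b(u,v)=\me(u,v)+\int_{\po}uv\,d\mu^{-}-\int_{\Omega}\nabla u\nabla v\,dx$ (which is your $\me^{\sharp}$ minus the Neumann form, so the two arguments rest on the same object). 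Concretely, it forms the approximating bilinear forms $b^{(\beta)}(u,v)=\beta\bigl(\beta(G_{\beta}^{-\mu^-}-G_{\beta}^{\me})u,v\bigr)$, which are nonnegative on positive pairs by the domination criterion; represents each by a positive Radon kernel $\nu_{\beta}$ on $\ombar\times\ombar$ via Lemma 1.4.1 of \cite{FOT}; extracts a vague limit $\nu$ of $(\beta\nu_{\beta})$ from the uniform bound $b^{(\beta)}\leq\me$; uses locality of $\me$ and of $\memn$ to force $\mathrm{supp}[\nu]$ into the diagonal, and the vanishing of $b$ on $\mathcal D(\Omega)$ to push it onto $\po$; gets that $\nu$ charges no relatively polar set from $\nu\leq|\mu|$, which comes from the other domination; and finally identifies $D(\me)$ with $\mf^{\nu}$ by truncation $u\wedge n$. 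So your route buys modularity (and, by comparing with $\eN$ rather than $\emun$, lands in a previously studied setting) at the price of outsourcing the entire construction of $\nu$; the paper's route is self-contained and skips your intermediate verifications that $\me^{\sharp}$ is sub-Markovian, regular and dominated by $\eN$, none of which it needs. Your auxiliary steps --- relative $\me$-boundedness of $Q_{\mu^-}$ with bound $c(\mu^-)=0$, positivity of $T$ giving $\me(|u|,|u|)\leq\me(u,u)$, the squeeze yielding $\me^{\sharp}=\int_{\Omega}\nabla u\nabla v\,dx$ on $H^1_0(\Omega)$, and the implication $(1)\Rightarrow(2)$ --- are all correct, though you should state explicitly the lower domination $\eD\leq S(t)$ (it follows from Ouhabaz's criterion since $H^1_0(\Omega)$ is an ideal of $D(\me^{\sharp})$), as the classification theorem requires both bounds.
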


\begin{proof}
 (1)$\Rightarrow$(2) This part follows from Proposition \ref{pro:loc}.

(2)$\Rightarrow$(1) We have $D(\me)$ is an ideal of $\Huntild$ and for all $u,v\in D(\me)_+$ we have,
\[
 \int_{\Omega}\nabla u\nabla v dx-\int_{\po}uvd\mu^-\leq\me(u,v)
\]
For $u,v\in D(\me)\cap C_c(\ombar)$ we let
\[
 b(u,v)=\me(u,v)+\int_{\po}uvd\mu^--\int_{\Omega}\nabla u\nabla v dx
\]
Let $\{G_{\beta}^{\me}:\beta>0\}$ be the resolvent of the operator associated with the closed form $(\me, D(\me))$ and $\{G_{\beta}^{-\mu^-}:\beta>0\}$ be the resolvent of $\Delta_{-\mu^-}$. Let $\me^{(\beta)}$ and $\me_{-\mu^-}^{(\beta)}$ be the approximation forms of $\me$ and $\memn$ and let
\begin{equation}
\begin{split}
 b^{(\beta)}(u,v)&:= \me^{(\beta)}(u,v)-\me_{-\mu^-}^{(\beta)}(u,v)\\
                 &=\beta\left(u-\beta G_{\beta}^{\me}u,v\right)-\beta\left(u-\beta G_{\beta}^{-\mu^-}u,v\right)\\
                 &=\beta(\beta(G_{\beta}^{-\mu^-}-G_{\beta}^{\me})u,v)
\end{split}
\end{equation}
Since by domination criterion, $b^{(\beta)}(u,v)\geq 0$ for all positive $u,v\in D(\me)\cap C_c(\ombar)$, we have that $\beta(G_{\beta}^{-\mu^-}-G_{\beta}^{\me})$ is a positive symmetric operator on $L^2(\omega)$ and it then follows from \cite{FOT}(Lemma 1.4.1.) that there exists a unique positive Radon measure $\nu_{\beta}$ on $\ombar\times\ombar$ such that for all $u,v\in D(\me)\cap C_c(\ombar)$ we have
\[
  b^{(\beta)}(u,v)=\beta(\beta(G_{\beta}^{-\mu^-}-G_{\beta}^{\me})u,v)=\beta\int_{\ombar}u(x)v(y)d\nu_{\beta}
\]
It is clear that $b^{(\beta)}(u,v)\to b(u,v)$ as $\beta\nearrow^{\infty}$ for all $u,v\in D(\me)\cap C_c(\ombar)$. Since for each $\beta>0$ and $u\in D(\me)\cap C_c(\ombar)$
\[
 b^{(\beta)}(u,v)\leq\me(u,v)
\]
it follows that the sequence $(\beta\nu_{\beta})$ is uniformly bounded on each compact subsets of $\ombar\times\ombar$ and hence a subsequence converges as $\beta_n\to {\infty}$ vaguely on $\ombar\times\ombar$ to a positive Radon measure $\nu$. The form $(\me,D(\me))$ is regular and then $\nu$ is unique and therefore for all $u,v\in D(\me)\cap C_c(\ombar)$
\[
  b(u,v)=\int_{\ombar}u(x)v(y)d\nu
\]
Since $(\me, D(\me))$ and $(\memn,\Huntild)$ are local, it follows that $b(u,v)=0$ for all $u,v\in D(\me)\cap C_c(\ombar)$ with $\mathrm{supp}[u]\cap\mathrm{supp}[v]=\emptyset$. This implies that $\mathrm{supp}[\nu]\subset\{(x,x):x\in\ombar\}$, and therefore
\[
 b(u,v)=\int_{\ombar}u(x)v(x)d\nu
\]

Since $b(u,v)=0$ for all $u,v\in\mathcal D(\Omega)\subset D(\me)$, we have $\mathrm{supp}[\nu]\subset\ombar\setminus\Omega=\po$ and thus 
\[
b(u,v)=\int_{\po}u(x)v(x)d\nu
\]

Consequently, for all $u,v\in D(\me)\cap C_c(\ombar)$ we have
\[
 \me(u,v)=\int_{\Omega}\nabla u\nabla v dx+\int_{\po}uvd\nu-\int_{\po}uvd\mu^-
\]

The positive Radon measure $\nu$ charges no set of zero relative capacity. In fact, we have $\me(u,u)\leq\memp(u,u)$ for all $u\in\mfmp\subset D(\me)$, which implies
\[
 \int_{\po}|u|^2d\nu\leq\int_{\po}|u|^2d|\mu|
\]
With a particular choice of the function $u$, we have for all Borel subsets $\mathcal O\subset\po$

\[
 \nu(\mathcal O)\leq|\mu|(\mathcal O)
\]

If $\mathcal O$ is of zero relative capacity then $\nu(\mathcal O)=0$, thus $\nu$ also charges no set of zero relative capacity.

To finish , it still to prove that $(\mathcal E,D(\me))=(\mathcal E_{\nu-\mu^-},\mathcal F^{\nu})$.


It is clear that $\mathcal F^{\nu}$is a closed subspace of $D(\me)$. We show that $D(\me)$ is a subspace of $\mathcal F^{\nu}$. Let $u\in D(\me)$. For $n\in\mathbb N$ we let $u_n=u\wedge n$. Then $u_n\in\Huntild$ is relatively quasi-continous. Since $0\leq u_n\leq n$ and $\nu(\po)<\infty$, it follows that $u_n\in L^2(\po,\nu)$ and therefore $u_n\in\mathcal F^{\nu}$. It is also clear that $u_n\to u$ $\Huntild$ and thus after taking a subsequence if necessary, we may assume that $u_n\to u$ r.q.e.(see proposition 2.1. \cite{AW2}). since $\nu$ charges no set of zero relative capacity, it follows that $u_n\to u$ $\nu-$a.e. on $\po$. Finally, since $0\leq u_n\leq k$, the Lebesgue Dominated Convergence Theorem implies that $u_n\to u$ in $L^2(\po,\nu)$ and thus $u_n\to u$ in $\mathcal F^{\nu}$ and therefore $u\in\mathcal F^{\nu}$.

\end{proof}

We can drop out the condition that $(\mem,\mfm)$ is regular, but in this case we shoud add with the locality assymption the fact that $D(\me)\cap C_c(\ombar)$ is dense in $D(\me)$. One can then follow the proof of Theorem 4.1 in \cite{AW2} and the technics in Theorem \ref{thm:dom2} to prove the existence of such measure $\nu$. The inconvenient in this case is that $\nu$ is not necessary unique.


\par\bigskip



\begin{thebibliography}{99}
\addcontentsline{toc}{chapter}{Bibliographie}

\bibitem{A} Akhlil, K. : Probabilistic Solution of the General Robin Boundary Value Problem on Arbitrary Domains {\em International Journal of Stochastic Analysis} vol. 2012, 17 pages, 2012.

\bibitem{AM1} Albeverio, S., Ma, Z.-M.: Perturbation of Dirichlet forms-lower semiboundedness, closability,
and form cores. J. Funct. Anal, {\bf 99} (1991), 332-356.

\bibitem{AM2} Albeverio S., Ma Z.-M. : {\it Additive functionals, nowhere Radon and Kato class smooth
measures associated with Dirichlet forms.} Osaka J. Math, {\bf 29}
(1992) ), 247-265.

\bibitem{AW1}  Arendt W., Warma M.: The Laplacian with Robin boundary conditions
on arbitrary domains. {\em Potential Anal.} {\bf 19} (2003),
341-363.

\bibitem{AW2}  Arendt W., Warma M.: Dirichlet and Neumann boudary conditions: What is in between?. {\em J. evol. equ} {\bf 3} (2003),
119-135.

\bibitem{BH2} Bass R.F., Hsu E. P.: {\it The semimartingale structure of reflecting Brownian
motion.} Proceedings of the American Mathematical Society, {\bf
108}, (1990) 1007-1010.

\bibitem{BM} Blanchard, Ph., Ma, Z.-M. {\it New Results on the Schrodinger Semigroups with Potentials given
by Signed Smooth Measures.} preprint.

\bibitem{Ru} Rudin, W.:{\it Real and Complex Analysis.} McGraw-Hill, Inc., 1966.

\bibitem{FOT} Fukushima M. , Oshima Y., Takeda M.: {\it Dirichlet Forms and Symmetric Markov Processes.}
Walter de Gruyter, Berlin, (1994).

\bibitem{MS} Ma, Z-M, Song, R.: Probabilistic Approach to Semilinear and Generalized Mixed Boundary Value Problems. {\em Acta Math. Appl. Sinca}, {\bf 8} (1992), 214-228.

\bibitem{MS2} Ma, Z-M, Song, R.: Probabilistic Methods in Schr\"odinger Equations, Preprint.

\bibitem{Ou} Ouhabaz, E.M. : Propri\'et\'es d'ordre et contractivit\'e des semi-groupes avec application aux operateurs elliptiques. Ph.D Thesis, Universit\'e de Franche-Compt\'e,Besancon, 1992.

\bibitem{P} Papanicolaou, V. G.:{\it The probabilistic solution of the third boundary value problem for second order
elliptic equations.} Probab. Theory Related Fields {\bf 87}
(1990). 27-77.

\bibitem{R} Ramasubramanian, S.: {\it Reflecting brownian motion in
a lipschiz domain and a conditional gauge theorem.}Sankhya : The
Indian Journal of Statistics 2001, Volume 63, Series A, Pt. 2, pp.
178-193

\bibitem{S} Stollmann, P. :{\it Smooth perturbation of Regular Dirichlet
Forms.} Proc. of the Amer. Math. Soc., Vol.116, No. 3.
(Nov.,1992), pp. 747-752.

\bibitem{SV} Stollman, P., Voigt, J.:{\it Perturbation of Dirichlet forms by
measures.} Potential Analysis, {\bf 5} (1996), 109-138.

\bibitem{Wa} Warma M.: The Laplacian with General
Robin Boundary Conditions. Ph.D. thesis, University of Ulm,
(2002).

\end{thebibliography}
\end{document}